\numberwithin{equation}{section}
\newtheorem{theorem}{Theorem}[section]
\newtheorem{lemma}{Lemma}[section]
\newtheorem{definition}{Definition}[section]
\newtheorem{remark}{Remark}[section]
\newcommand{\bal}{\begin{align}}
\newcommand{\bbal}{\begin{align*}}
\newcommand{\beq}{\begin{equation}}
\newcommand{\eeq}{\end{equation}}
\newcommand{\bca}{\begin{cases}}
\newcommand{\eca}{\end{cases}}
\def\div{\mathord{{\rm div}}~}
\def\al{\alpha}
\def\a2{\alpha^2}
\def\w{\omega}
\def\S{\mathbf{S}_{t}}
\def\la{\varLambda^s}
\def\l1{\varLambda^{s-1}}
\newcommand{\pa}{\partial}
\newcommand{\fr}{\frac}
\newcommand{\na}{\nabla}
\newcommand{\De}{\Delta}
\newcommand{\cd}{\cdot}
\newcommand{\dd}{\mathrm{d}}
\newcommand{\R}{\mathbb{R}}
\newcommand{\M}{\mathcal{M}}
\newcommand{\N}{\mathcal{N}}
\newcommand{\D}{\Delta}
\newcommand{\nn}{\nonumber\\}
\newcommand{\g}{\big}
\begin{document}
\title{The uniform existence time and Zero-Alpha limit problem of the Euler-Poincaré equations}

\author{Min Li$^{1}$ and Zhaoyang Yin$^{2,3}$\footnote{Corresponding author's email: mcsyzy@mail.sysu.edu.cn} \\
\small $^1$ Department of Mathematics, Jiangxi University of Finance and Economics, Nanchang, 330032, China\\
\small $^2$ Department of Mathematics, Sun Yat-sen University, Guangzhou, 510275, China\\
\small $^3$ Faculty of Information Technology, Macau University of Science and Technology, Macau, China}
\date{}

\maketitle\noindent{\hrulefill}

{\bf Abstract:} We consider the Cauchy problem of the Euler-Poincaré equations in $\mathbb{R}^d$ with a varying dispersion parameter $\alpha$. Based on the convex entropy structure and the modified commutator estimates, we have proved that the Euler-Poincaré equations have a uniform existence time with respect to $\alpha$ in Sobolev spaces $H^s.$ Combined with the Bona-Simth method, we obtain convergence of the solutions to the Euler-Poincaré equations as $\alpha\to 0$ in the same space where the initial data are located. 

{\bf Keywords:} Euler-Poincaré equations; Zero-Alpha limit; uniform existence time; convergence. 

{\bf MSC (2010):} 35Q35, 35Q51, 35L30
\vskip0mm\noindent{\hrulefill}

\section{Introduction}\label{sec1}
In this paper, we consider the Cauchy problem in $\R^d$ for Euler-Poincaré equations
\begin{equation}\label{E-P}\tag{EP}
\begin{cases}
\partial_tm+u\cdot \nabla m+\nabla u^T\cd m+(\mathrm{div} u)m=0, \qquad &(t,x)\in \R^+\times \R^d,\\
m=(1-\a2\De)u,\qquad &(t,x)\in \R^+\times \R^d,\\
u(0,x)=u_0,\qquad &x\in \R^d,
\end{cases}
\end{equation}
here $u,~m$ represent the velocity and momentum respectively, $\nabla u^T=\bigl(\pa_iu^j\bigr)$  is the transpose of the Jacobian $\nabla u=\bigl(\pa_ju^i\bigr)$, and $\al\in [0,\al_0],\al_0>0$ is a dispersion parameter with the length scale. To avoid any confusion, the first equation can be written in the component-wise form as
$$\pa_tm^i+\sum_{1\leq j\leq d}u^j\pa_j m^i+\sum_{1\leq j\leq d}\pa_iu^jm^j+\bigl(\sum_{1\leq j\leq d}\pa_ju^j\bigr)m^i=0, \qquad i=1,2,\cdots,d.$$
The equations \eqref{E-P} were first introduced by Holm, Marsden, and Ratiu in \cite{hmr1,hmr2} as a high dimensional generalization of the following Camassa-Holm equation for modeling and analyzing the nonlinear shallow water waves :
\begin{align}\label{che}\tag{CH}
  m_{t}+um_{x}+2u_{x}m=0, \ m=u-u_{xx}.
  \end{align}
Indeed, when $d=1$ the Euler-Poincar\'{e} equations are the same as the Camassa-Holm equation (\ref{che}). Also, the Euler-Poincaré equations were investigated as the system describe geodesic motion on the diffeomorphism group with respect to the kinetic energy norm in \cite{hs}. 

 For $d=1$, the equation (\ref{che}) was introduced by Camassa and Holm\cite{ch} as a bi-Hamiltonian model for shallow water waves. Most importantly, CH equation has peakon solutions of the form $Ce^{-|x-Ct|}$ which aroused a lot of interest in physics, see \cite{c5,t}. There is an extensive literature about the strong well-posedness, weak solutions and analytic or geometric properties of the CH equation, here we name some. Local well-posedness and ill-posedness for the Cauchy problem of the CH equation were investigated in \cite{ce2,d2,glmy}. Blow-up phenomena and global existence of strong solutions were discussed in \cite{c2,ce2,ce3,ce4}. The existence of global weak solutions and dissipative solutions were investigated in \cite{bc1,bc2,xz1}, more results can be found in the references therein.

 As part of the well-posedness theory, the continuity properties of the data-to-solution map is important. Recently, starting from the research of Himonas et al. \cite{hk,hkm}, the continuity properties of the data-to-solution maps of the Camassa-Holm type equations are gradually attracting interest of many authors, see \cite{lyz1,lyz2}. Most of the non-uniform constinuity results are established only on a bounded set near the origin. To overcome this limitation, Inci obtained a series of nowhere uniform continuity results including many Camassa-Holm type equations \cite{inc1,inc2}. And for the incompressible Euler equation, Bourgain and Li \cite{bl} showed that the data-to-solution map is nowhere-uniform continuity in $H^s(\R^d)$ with $s\geq 0$ by using an idea of localized Galilean boost. Based on Galilean boost method and the symmetric structure, we prove that the data-to-solution map of the Euler-Poincar\'{e} equations \eqref{E-P} is not uniformly continuous on any open subset $U\subset B^s_{p,r}(\R^d),s>\max\{1+\frac d2,\frac32\}$ \cite{ll}.

The first rigorous analysis of the Euler-Poincaré equations \eqref{E-P}  was done by Chae and Liu \cite{cli}, they eatablished the local existence of weak solution in $W^{2,p}(\R^d),\ p>d$ and local existence of unique classical solutions in $H^{s}(\R^d),\ s>\frac d 2+3$. Yan and Yin \cite{yy} further discussed the local existence and uniqueness of the solution to \eqref{E-P} in Besov spaces. On the other hand, Li, Yu and Zhai \cite{lyzz} proved that the solutions to \eqref{E-P} with a large class of smooth initial data blows up in finite time or exists globally in time, which settled an open problem raised by Chae and Liu \cite{cli}. Later, Luo and Yin have obtained a new blow-up result in the periodic case by using the rotational invariant properties of the equation\cite{luoy}. For more results of Euler-Poincar\'{e} equations, see \cite{luoy,zyl}.

The Zero-Alpha limit problem has been studied by Chae and Liu in their pioneering paper \cite{cli}. They prove that as the dispersion parameter vanishes, the weak solution converges
to a solution of the zero dispersion equation with sharp rate as $\al\to 0$, provided that
the limiting solution belongs to $C([0, T ); H^s (\R^d ))$ with $s > d/2 + 3$. This result has been extended to the modified two-component Euler–Poincaré equations by Yan and Liu \cite{yl}. 
 
It is worth to mention that, in Chae and Liu's work, the uniform existence time $T$ with respect to $\al$ is based on assumptions and has not been proven. In fact, by the standard energy method, the existence time $T_\al$ of the solution would shrink to zero as $\al\to 0$. On the other hand, in dealing with the convergence problem of the solutions of \eqref{E-P} as $\al\to 0$, previous results required the assumption that the space of initial data possesses better regularity than the space where convergence is established. Naturally, this leads to the emergence of the following

\vspace*{1em}
\noindent\textbf{Problem 1.} {\it For the solutions of Euler–Poincaré system \eqref{E-P} with the same initial data, do there exist uniform existence time $T>0$ with respect to $\al\in[0,\al_0]$? }

\noindent\textbf{Problem 2.} {\it As $\al\to 0,$ does the solution of \eqref{E-P} converge to the solution of zero dispersion equation ($\al=0$) in the same space where the initial data are located?}

\vspace*{1em}

For simplicity, we first transform \eqref{E-P} into a transport type system. According to Yan\cite{yy}, the Euler-Poincaré equations are equivalent to  
\begin{align}\label{lns}
&~~~~~~~~~~~~~~~~~~~~~~~~~~~~~\partial_tu+u\cdot \nabla u=-(I-\alpha^2\De)^{-1}\nonumber\\
&\Big(\alpha^2\ \div\big(\underbrace{\nabla u\nabla u+\nabla u\nabla u^T- \nabla u^T\nabla u-\nabla u(\div u)+\frac12(\nabla u : \nabla u)\mathbf{I}}_{\M(u,u)}\big)+\underbrace{u(\div u)+\nabla u^T u}_{\N(u,u)}\Big),
\end{align}
Based on the structure of $\M(u,u)$ and $\N(u,u)$, we define the following bilinear symmetric form 
\begin{equation}\label{qr}
  \begin{cases}
&\M(u,v)=\frac12\big(\nabla u\nabla v+\nabla u\nabla v^T-\nabla u^T\nabla v-\nabla u(\mathrm{div} v)+(\nabla u : \nabla v)\mathbf{I}\\
&\qquad\qquad\qquad\nabla v\nabla u+\nabla v\nabla u^T-\nabla v^T\nabla u-\nabla v(\mathrm{div} u)\big),\\
&\N(u,v)=\frac12\big(u~\div v+\nabla v^T u+ v~\div u+\nabla u^T v \big).
\end{cases}
\end{equation}
It should be noted that, $\M(u,v)$ is a bilinear matrix-valued function of $(\nabla u,\nabla v),$  and $\N(u,v)$ is a bilinear vector-valued function of $(u,\nabla u; v, \nabla v)$ in \eqref{qr}. And by intentionally constructed, they are symmetric on $(u, v):$ 
$$\M(u,v)=\M(v,u),\quad\text{and}\quad \N(u,v)=\N(v,u). $$
With these definitions in hand, for all $\al\in [0,\al_0]$ we can rewrite \eqref{E-P} to the nonlocal form
    \begin{equation}\label{epa}\tag{$EP_\alpha$}
      \begin{cases}
        \partial_tu+u\cdot \nabla u=-(I-\alpha^2\De)^{-1}\Big(\a2\div\M(u,u)+\N(u,u)\Big), \qquad &(t,x)\in \R^+\times \R^d,\\
      u(0,x)=u_0,\qquad &x\in \R^d.
      \end{cases}
      \end{equation}
Particularly, When $\alpha = 0$, the \eqref{epa} becomes 
\begin{equation}\label{ep0}\tag{$EP_0$}
  \begin{cases}
    \partial_tu+u\cdot \nabla u=-\N(u,u), \qquad &(t,x)\in \R^+\times \R^d,\\
  u(0,x)=u_0,\qquad &x\in \R^d,
  \end{cases}
  \end{equation}
which is also a symmetric hyperbolic system of conservation laws of the form 
$$\partial_tu+\div ( u\otimes u )+\frac12\nabla|u|^2=0$$ 
and possess a global convex entropy function $\frac12\partial_t|u|^2+\div (|u|^2 u)=0.$

Based on a newly discovered entropy convexity structure of the system \eqref{epa} and the Bona-Simth method, this paper will provide affirmative answers to the questions in \textbf{ Problem 1} and \textbf{ Problem 2}, namely the following two theorems.

  \begin{theorem}[{\bf Uniform time of existence}]\label{th1} Let $d\geq 2,s>1+\frac d2$ and $\alpha \in[0,\al_0]$. Assume that the initial data $u_0\in H^s(\mathbb{R}^d),$ and $\mathbf{S}_{t}^{\alpha}(u_0)$ be the solution of \eqref{epa} with initial data $u_0.$ There exists a time $T>0$ independent of $\al$ such that for all $\alpha \in[0,\al_0]$ there exists a solution $u(t)=\mathbf{S}_{t}^{\alpha}(u_0)$ of  \eqref{epa} bounded in $\mathcal{C}([0,T];H^s)$ independently of $\al.$ Moreover, the time existence $T$ depends only on $~ s,~ d$ and $\|u_0\|_{H^s}.$
  \end{theorem}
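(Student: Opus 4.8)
The plan is to derive an energy estimate for \eqref{epa} in $H^s$ whose constants do not degenerate as $\al\to 0$. The naive approach — applying $\varLambda^s=(I-\De)^{s/2}$ to \eqref{epa}, pairing with $\varLambda^s u$, and using the Kato--Ponce commutator estimate — fails because the term $\a2(I-\a2\De)^{-1}\div\M(u,u)$ contains two derivatives of $u$ multiplied by $\a2(I-\a2\De)^{-1}\na$, an operator that is only $O(\al)$-bounded on $L^2$ (one power of $\al$ comes from the smoothing, but the other $\al$ and one derivative must still be absorbed). The correct idea, signalled in the abstract by the phrases \emph{convex entropy structure} and \emph{modified commutator estimates}, is to work with the $\al$-dependent energy
\beq\label{plan-energy}
E_s^\al(u)^2 := \|\varLambda^s u\|_{L^2}^2 + \a2\|\varLambda^s\na u\|_{L^2}^2 = \langle(I-\a2\De)\varLambda^s u,\varLambda^s u\rangle = \langle \varLambda^s m,\varLambda^s u\rangle,
\eeq
which is comparable to $\|u\|_{H^s}^2$ with constants independent of $\al$ (since $1\le 1+\a2|\xi|^2$ trivially, and one only needs the upper comparison $\a2|\xi|^2\lesssim \a2|\xi|^2$ on the range $s$, i.e.\ $E_s^\al(u)\le C_{\al_0}\|u\|_{H^{s}}$ when $u\in H^{s+1}$; more carefully one keeps $E_s^\al$ as the working norm and only at the end notes $\|u\|_{H^s}\le E_s^\al(u)$). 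Differentiating $E_s^\al(u(t))^2$ in time and using the first equation of \eqref{E-P} directly — rather than \eqref{epa} — the bad elliptic operator is removed: $\partial_t\langle\varLambda^s m,\varLambda^s u\rangle = 2\langle\varLambda^s\partial_t m,\varLambda^s u\rangle$ modulo the relation $m=(I-\a2\De)u$, and $\partial_t m = -u\cdot\na m-\na u^T m-(\div u)m$ has no $(I-\a2\De)^{-1}$ in it.

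The key steps, in order, are as follows. First, establish the norm equivalence $\|u\|_{H^s}\le E_s^\al(u)\lesssim_{\al_0}\|u\|_{H^{s+1}}$ and, crucially, note that $E^\al_s$ is the natural quantity for the regularized problem. Second, following Chae--Liu \cite{cli} and Yan \cite{yy}, set up the a priori estimate: apply $\varLambda^s$ to $\partial_t m+u\cdot\na m+\na u^T m+(\div u)m=0$, take the $L^2$ inner product with $\varLambda^s u$, and symmetrize. The transport term $\langle\varLambda^s(u\cdot\na m),\varLambda^s u\rangle$ is handled by writing it as $\langle u\cdot\na\varLambda^s m,\varLambda^s u\rangle + \langle[\varLambda^s,u\cdot\na]m,\varLambda^s u\rangle$; for the first piece one integrates by parts and uses $m=(I-\a2\De)u$ to convert it back into $-\tfrac12\langle(\div u)\varLambda^s m,\varLambda^s u\rangle$ plus an $\a2$-term $\langle u\cdot\na\varLambda^s\na u,\varLambda^s\na u\rangle$ which again integrates by parts to $\tfrac12\langle(\div u)\varLambda^s\na u,\varLambda^s\na u\rangle$ — both bounded by $\|\na u\|_{L^\infty}E_s^\al(u)^2$. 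Third — this is where the \emph{modified} commutator estimate enters — bound $\langle[\varLambda^s,u\cdot\na]m,\varLambda^s u\rangle$: the commutator is applied to $m$, which has two more derivatives than $u$ when $\al>0$, so the standard Kato--Ponce bound $\|[\varLambda^s,f]g\|_{L^2}\lesssim \|\na f\|_{L^\infty}\|g\|_{H^{s-1}}+\|f\|_{H^s}\|g\|_{L^\infty}$ applied with $g=m$ would produce $\|m\|_{H^{s-1}}=\|u\|_{H^{s+1}}$, which is not controlled by $E_s^\al$. Instead one rewrites $m=u-\a2\De u$ and commutes separately: the $u$-part is classical, while for the $\a2\De u$-part one must exploit that $\langle[\varLambda^s,u\cdot\na]\a2\De u,\varLambda^s u\rangle$, after integrating the extra derivatives by parts onto $\varLambda^s u$ (producing $\a2\varLambda^s\na u$ and $\varLambda^s u$), matches against $\a2\|\varLambda^s\na u\|_{L^2}^2$ inside $E_s^\al$ with only first-order coefficients of $u$ — i.e.\ one proves a commutator identity of the schematic form $\langle[\varLambda^s,u\cdot\na](I-\a2\De)u,\varLambda^s u\rangle \lesssim \|\na u\|_{L^\infty}\, E_s^\al(u)^2$ with $\al$-uniform constant. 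The lower-order terms $\langle\varLambda^s(\na u^T m),\varLambda^s u\rangle$ and $\langle\varLambda^s((\div u)m),\varLambda^s u\rangle$ are treated the same way: expand $m=u-\a2\De u$, use the product Moser estimate on the $u$-part, and integrate by parts twice on the $\a2\De u$-part to land inside $E_s^\al$. Fourth, assemble everything into the differential inequality $\tfrac{d}{dt}E_s^\al(u)^2\le C(s,d)\,\|\na u\|_{L^\infty}\,E_s^\al(u)^2\le C(s,d)\,E_s^\al(u)^3$, where Sobolev embedding $H^{s-1}\hookrightarrow L^\infty$ (valid since $s>1+\tfrac d2$) is used with an $\al$-uniform constant. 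Gronwall then yields $E_s^\al(u(t))\le E_s^\al(u_0)/(1-C t\,E_s^\al(u_0))$, so the solution persists on $[0,T]$ with $T=T(s,d,\|u_0\|_{H^s})$ chosen below $1/(2C\,E_s^\al(u_0))$, using $E_s^\al(u_0)\le C_{\al_0}\|u_0\|_{H^{s+1}}$ — or, to avoid requiring $u_0\in H^{s+1}$, one runs the estimate on smooth approximations $u_0^\ep$, gets the bound with $T$ depending on $\|u_0^\ep\|_{H^s}$ only (noting $E_s^\al(u_0^\ep)$ can be bounded by $\|u_0^\ep\|_{H^s}$ \emph{plus} an $\ep$-error that is harmless since $\al\le\al_0$ is fixed in comparison to the chosen lifespan — more precisely one keeps $E^\al_s$ throughout and only converts at the very end), then passes to the limit. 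Finally, local existence of $\mathbf{S}_t^\al(u_0)$ in $\mathcal C([0,T_\al];H^s)$ for each fixed $\al$ is already known from Chae--Liu \cite{cli} / Yan \cite{yy}; a standard continuation argument combined with the $\al$-uniform bound $\sup_{[0,T]}\|u(t)\|_{H^s}\le\sup_{[0,T]}E_s^\al(u(t))\le 2E_s^\al(u_0)$ shows $T_\al\ge T$ for all $\al\in[0,\al_0]$, giving the claimed uniform lifespan.

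The main obstacle is precisely the third step: the modified commutator estimate that shows $\langle[\varLambda^s,u\cdot\na]m,\varLambda^s u\rangle$ is controlled by $\|\na u\|_{L^\infty}E_s^\al(u)^2$ with a constant independent of $\al$. The difficulty is structural — $m$ carries two extra derivatives relative to the energy level when $\al>0$, and a black-box Kato--Ponce inequality loses the $\al$-gain. One must instead do the commutator by hand, integrating the two $\De$-derivatives back onto the smoother factor $\varLambda^s u$ and carefully tracking that every term either (i) has at most $\|\na u\|_{L^\infty}$ (not $\|\De u\|_{L^\infty}$ or higher) as its $L^\infty$-type coefficient, and (ii) pairs two factors each of which is either $\varLambda^s u$ or $\al\varLambda^s\na u$, so the whole thing is $\lesssim\|\na u\|_{L^\infty}(\|\varLambda^s u\|_{L^2}^2+\a2\|\varLambda^s\na u\|_{L^2}^2)$. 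This is the technical heart of the paper and is where the convex-entropy/symmetrization structure of \eqref{E-P} — specifically that pairing $\partial_t m$ against $u$ (not against $m$) keeps the energy at the right regularity — is indispensable.
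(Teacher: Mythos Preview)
Your approach differs from the paper's and contains a genuine gap. You work with the $m$-equation and the $\al$-weighted energy $E_s^\al(u)^2=\|u\|_{H^s}^2+\a2\|\na u\|_{H^s}^2$, aiming for $\tfrac{d}{dt}E_s^\al\le C(E_s^\al)^2$ and hence a lifespan $T\sim 1/(C\,E_s^\al(u_0))$. But $E_s^\al(u_0)$ is \emph{not} controlled by $\|u_0\|_{H^s}$ uniformly in $\al$: for $u_0\in H^s\setminus H^{s+1}$ and any $\al>0$ it is infinite, so the argument yields no lifespan at all. The approximation fix you sketch does not help either, since for mollified data $u_0^\ep\to u_0$ in $H^s$ one has $E_s^{\al}(u_0^\ep)\ge\al\|\na u_0^\ep\|_{H^s}\to\infty$ as $\ep\to0$ whenever $u_0\notin H^{s+1}$, so the approximate lifespans collapse before one can pass to the limit. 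The energy $E_s^\al$ is the right object for local theory at a \emph{fixed} $\al$ (this is essentially how \cite{cli,yy} proceed), but it cannot deliver an $\al$-uniform lifespan depending only on $\|u_0\|_{H^s}$. You have also misidentified the difficult term in \eqref{epa}: the $\a2(I-\a2\De)^{-1}\div\M$ contribution is in fact harmless, because the multiplier $\a2(1+|\xi|^2)/(1+\a2|\xi|^2)$ is bounded uniformly on $[0,\al_0]$, giving $\|\a2(I-\a2\De)^{-1}\div\M(u,u)\|_{H^s}\le C\|\M(u,u)\|_{H^{s-1}}\le C\|u\|_{H^s}^2$ directly. The genuine obstruction is the $\N$ term, for which no such uniform multiplier bound is available (cf.\ Remark~1.1).

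The paper stays with the nonlocal form \eqref{epa} and the plain $H^s$ norm throughout. After applying $\la$ and pairing with $\la u$, the transport and $\M$ contributions are routine; the entire difficulty is to bound $\langle\la_\al\N(u,u),\la u\rangle$, where $\la_\al:=\la(I-\a2\De)^{-1}$. Here the paper exploits the explicit bilinear structure $\N(u,u)^i=\sum_j(u^i\pa_ju^j+u^j\pa_iu^j)$: after pulling the scalar coefficients $u^i,u^j$ through $\la_\al$ (respectively $\la$), which produces commutators, and then swapping the summation indices $i\leftrightarrow j$ in one of the two resulting main pieces, those pieces combine into $\sum_{i,j}\int u^i\pa_j\big(\la_\al u^j\,\la u^i\big)\dd x$, which integrates by parts to a term bounded by $\|\na u\|_{L^\infty}\|u\|_{H^s}^2$. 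This is Lemma~\ref{gce} (``Global Convexity''), and it is what ``convex entropy structure'' means in this paper --- not an $\al$-weighted energy. The ``modified commutator estimate'' (Lemma~\ref{mce}) is the Kato--Ponce-type bound for $[\la_\al,v\cd\na]$, valid with $\al$-uniform constant simply because $(I-\a2\De)^{-1}$ is uniformly bounded on every $L^p$. These two ingredients give $\tfrac{d}{dt}\|u\|_{H^s}\le C\|u\|_{H^s}^2$ with $C$ independent of $\al$, and the uniform lifespan follows by the usual continuation argument.
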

\begin{remark}
Note that, for the fixed $\al\in [0,\al_0],$ we already have the local well-posedness of the Cauchy problem \eqref{epa} in $H^s(\mathbb{R}^d),~s>1+\frac d2.$ To obtain the uniform existence time w.r.t. $\al$, by the standard continuity arguments, it is sufficient to establish $\frac{\dd }{\dd t}\|u(t)\|_{H^s}\leq C \|u(t)\|^2_{H^s}$ with some constant $C$ independent of  $\al$. For the first term in the RHS of \eqref{epa}, we can easily provide the uniform bound $\|\a2(I-\alpha^2\De)^{-1}\div\M(u,u)\|_{H^s}\leq C \|u(t)\|^2_{H^s}$. Due to the absence of dispersion factor $\a2$, we cannot obtain a similar estimate for the last term. In fact, the uniform inequality like $\|(I-\alpha^2\De)^{-1}\N(u,u)\|_{H^s}\leq C \|u(t)\|^2_{H^s}$ is not ture, this is the main difficulty that we need to overcome.   
\end{remark}
\begin{theorem}[{\bf Convergence in original space}]\label{th2} Let $d\geq 2,s>1+\frac d2$ and $\alpha \in(0,\al_0]$. Assume that the initial data $u_0\in H^s(\mathbb{R}^d),$ and $\mathbf{S}_{t}^{\alpha}(u_0),~\mathbf{S}_{t}^{0}(u_0)$ be the solutions of \eqref{epa} and \eqref{ep0} with initial data $u_0$ respectively. Then there exists a time $T=T(\|u_0\|_{H^s},s,d)>0$ such that  $\mathbf{S}_{t}^{\alpha}(u_0),\mathbf{S}_{t}^{0}(u_0)\in \mathcal{C}([0,T];H^s)$ and
  \begin{equation}\label{a20}
    \lim_{\alpha\to 0}\left\|\mathbf{S}_{t}^{\alpha}(u_0)-\mathbf{S}_{t}^{0}(u_0)\right\|_{L^\infty_TH^{s}}=0.   
  \end{equation}
  \end{theorem}
\begin{remark}
The previous convergence results of \eqref{E-P} as $\al\to 0$ required the initial value to have higher regularity than the space where convergence holds. Both in Chae-Liu \cite{cli} and Yan-Yin\cite{yl}, to achieve the convergence of the solutions in $L^\infty([0, T);H^{s})$, it is necessary to assume that the initial data $u_0$ at least belong to $H^{s+2}(\R^d)$. Theorem \ref{th2} does not require this additional assumption, that is to say the convergence holds in original space where the initial data are located.
\end{remark}

The remainder of this paper is organized as follows. In Section \ref{sec2}, we list some notations and recall basic results of the Littlewood-Paley theory. In Section \ref{sec3}, we present the proof of Theorem \ref{th1} and Theorem \ref{th2} by establishing some technical lemmas and propositions.

\section{Littlewood-Paley analysis}\label{sec2}
We first present some facts about the Littlewood-Paley decomposition, the nonhomogeneous Besov spaces and their some useful properties (see \cite{bcd} for more details).

Let $\mathcal{B}:=\{\xi\in\R^d:|\xi|\leq 4/3\}$ and $\mathcal{C}:=\{\xi\in\R^d:3/4\leq|\xi|\leq 8/3\}.$
Choose a radial, non-negative, smooth function $\chi:\R^d\mapsto [0,1]$ such that it is supported in $\mathcal{B}$ and $\chi\equiv1$ for $|\xi|\leq3/4$. Setting $\varphi(\xi):=\chi(\xi/2)-\chi(\xi)$, then we deduce that $\varphi$ is supported in $\mathcal{C}$. Moreover,
\begin{eqnarray*}
\chi(\xi)+\sum_{j\geq0}\varphi(2^{-j}\xi)=1 \quad \mbox{ for any } \xi\in \R^d.
\end{eqnarray*}
We should emphasize that the fact $\varphi(\xi)\equiv 1$ for $4/3\leq |\xi|\leq 3/2$ will be used in the sequel.

For every $u\in \mathcal{S'}(\mathbb{R}^d)$, the inhomogeneous dyadic blocks ${\Delta}_j$ are defined as follows
\begin{equation*}
{\Delta_ju=}
\begin{cases}
0,   &if \quad j\leq-2;\\
\chi(D)u=\mathcal{F}^{-1}(\chi \mathcal{F}u),  &if \quad j=-1;\\
\varphi(2^{-j}D)u=\mathcal{F}^{-1}\g(\varphi(2^{-j}\cdot)\mathcal{F}u\g), &if  \quad j\geq0.
\end{cases}
\end{equation*}
In the inhomogeneous case, the following Littlewood-Paley decomposition makes sense
$$
u=\sum_{j\geq-1}{\Delta}_ju\quad \text{for any}\;u\in \mathcal{S'}(\mathbb{R}^d).
$$
The nonhomogeneous low-frequency cut-off operator $S_n$ will be often used in the subsequent sections, which is defined by
\begin{align}
  S_n u=\sum_{j\leq n-1}{\Delta}_ju\quad \text{for any}\;u\in \mathcal{S'}(\mathbb{R}^d).\label{cut}
\end{align}
\begin{definition}\label{besov}
Let $s\in\mathbb{R}$ and $(p,r)\in[1, \infty]^2$. The nonhomogeneous Besov space $B^{s}_{p,r}(\R^d)$ is defined by
\begin{align*}
B^{s}_{p,r}(\R^d):=\Big\{f\in \mathcal{S}'(\R^d):\;\|f\|_{B^{s}_{p,r}(\mathbb{R}^d)}<\infty\Big\},
\end{align*}
where
\begin{numcases}{\|f\|_{B^{s}_{p,r}(\mathbb{R}^d)}=}
\left(\sum_{j\geq-1}2^{sjr}\|\Delta_jf\|^r_{L^p(\mathbb{R}^d)}\right)^{\fr1r}, &if $1\leq r<\infty$,\nonumber\\
\sup_{j\geq-1}2^{sj}\|\Delta_jf\|_{L^p(\mathbb{R}^d)}, &if $r=\infty$.\nonumber
\end{numcases}
\end{definition}
In particular, by the Fourier–Plancherel formula, Besov space $B^s_{2,2}(\R^d)$
coincides with the standard Sobolev space $H^s(\R^d).$ The following Bernstein's inequalities will be used in the sequel.
\begin{lemma} Let $\mathcal{B}$ be a Ball and $\mathcal{C}$ be an annulus. There exist constants $C>0$ such that for all $k\in \mathbb{N}\cup \{0\}$, any positive real number $\lambda$ and any function $f\in L^p(\R^d)$ with $1\leq p \leq q \leq \infty$, we have
\begin{align*}
&{\rm{supp}}\hat{f}\subset \lambda \mathcal{B}\;\Rightarrow\; \|D^kf\|_{L^q}:=\sup_{|\alpha^2|=k}\|\partial^\alpha f\|_{L^q}\leq C^{k+1}\lambda^{k+(\frac{d}{p}-\frac{d}{q})}\|f\|_{L^p},  \\
&{\rm{supp}}\hat{f}\subset \lambda \mathcal{C}\;\Rightarrow\; C^{-k-1}\lambda^k\|f\|_{L^p} \leq \|\D^kf\|_{L^p} \leq C^{k+1}\lambda^k\|f\|_{L^p}.
\end{align*}
\end{lemma}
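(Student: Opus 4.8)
The plan is to prove both estimates by the standard route for Bernstein inequalities (as in \cite{bcd}): represent a frequency-localized function as a convolution against a fixed, rescaled kernel, bound the convolution by Young's inequality, and recover the powers of $\lambda$ by dilation. Every power of $\lambda$ and the exponent $\frac dp-\frac dq$ will come out of the scaling automatically; the only genuine work is controlling the kernel norms geometrically in $k$.

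First I would treat the upper bound in the ball case. Fix $\phi\in C_c^\infty(\R^d)$ with $\phi\equiv1$ on a neighbourhood of $\mathcal{B}$. Since $\mathrm{supp}\,\hat f\subset\lambda\mathcal{B}$, we have $\hat f(\xi)=\phi(\lambda^{-1}\xi)\hat f(\xi)$, so for any multi-index with $|\alpha|=k$, $\widehat{\partial^\alpha f}(\xi)=(i\xi)^\alpha\phi(\lambda^{-1}\xi)\hat f(\xi)$ and hence $\partial^\alpha f=h_{\alpha,\lambda}*f$ with $h_{\alpha,\lambda}=\mathcal{F}^{-1}\big[(i\xi)^\alpha\phi(\lambda^{-1}\cdot)\big]$. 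A dilation gives $h_{\alpha,\lambda}(x)=\lambda^{d+k}h_\alpha(\lambda x)$, where $h_\alpha=\mathcal{F}^{-1}[(i\xi)^\alpha\phi]=\partial^\alpha\mathcal{F}^{-1}\phi$ is independent of $\lambda$. Young's inequality with $\frac1r=1-\frac1p+\frac1q$ yields $\|\partial^\alpha f\|_{L^q}\le\|h_{\alpha,\lambda}\|_{L^r}\|f\|_{L^p}$, and the dilation identity gives $\|h_{\alpha,\lambda}\|_{L^r}=\lambda^{k+d(\frac1p-\frac1q)}\|h_\alpha\|_{L^r}$. Taking the supremum over $|\alpha|=k$ reduces the first inequality to the uniform bound $\sup_{|\alpha|=k}\|h_\alpha\|_{L^r}\le C^{k+1}$ for all $r\in[1,\infty]$.

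This kernel bound is the crux, and it is where the factor $C^{k+1}$ originates. By interpolation $\|h_\alpha\|_{L^r}\le\|h_\alpha\|_{L^1}^{1/r}\|h_\alpha\|_{L^\infty}^{1-1/r}$, so it suffices to bound the $L^1$ and $L^\infty$ norms. For $L^\infty$ one has $\|h_\alpha\|_{L^\infty}\le\|(i\xi)^\alpha\phi\|_{L^1}\le R^k\|\phi\|_{L^1}$, where $R$ is the radius of $\mathrm{supp}\,\phi$. For $L^1$, I would use the weight $(1+|x|^2)^d\in(L^1)^{-1}$: writing $(1+|x|^2)^d h_\alpha=\mathcal{F}^{-1}\big[(1-\Delta_\xi)^d\big((i\xi)^\alpha\phi\big)\big]$ (up to harmless constants) gives $\|h_\alpha\|_{L^1}\le\|(1+|x|^2)^{-d}\|_{L^1}\,\|(1-\Delta_\xi)^d\big((i\xi)^\alpha\phi\big)\|_{L^1}$, and the Leibniz expansion of $(1-\Delta_\xi)^d(\xi^\alpha\phi)$ on the compact support of $\phi$ produces monomials of degree $\le k$ (bounded by $R^k$) times bounded derivatives of $\phi$, with combinatorial coefficients polynomial in $k$; all of this is dominated by $C^{k+1}$ for a suitable $C$. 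I expect this bookkeeping to be the only delicate point of the proof.

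For the annulus, the upper bound is identical with $\mathcal{C}$ in place of $\mathcal{B}$ and $q=p$ (so $r=1$). The lower bound requires reconstructing $f$ from its $k$-th order derivatives. I would fix $\psi\in C_c^\infty(\R^d)$ supported in an annulus, vanishing near the origin, with $\psi\equiv1$ on $\mathcal{C}$, and use the multinomial identity $|\xi|^{2k}=\sum_{|\alpha|=k}\frac{k!}{\alpha!}\xi^{2\alpha}$. Setting $\hat g_\alpha(\xi)=\frac{k!}{\alpha!}\frac{(-i\xi)^\alpha}{|\xi|^{2k}}\psi(\xi)$, which is well defined because $|\xi|$ is bounded below on $\mathrm{supp}\,\psi$, one checks $\sum_{|\alpha|=k}\hat g_\alpha(\lambda^{-1}\xi)(i\xi)^\alpha=\psi(\lambda^{-1}\xi)$, so on $\mathrm{supp}\,\hat f\subset\lambda\mathcal{C}$ we get $\hat f=\sum_{|\alpha|=k}\hat g_\alpha(\lambda^{-1}\cdot)\,\widehat{\partial^\alpha f}$, i.e. $f=\sum_{|\alpha|=k}g_{\alpha,\lambda}*\partial^\alpha f$. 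Young's inequality, the scaling $\|g_{\alpha,\lambda}\|_{L^1}=\lambda^{-k}\|g_\alpha\|_{L^1}$, and the bound $\sum_{|\alpha|=k}\|g_\alpha\|_{L^1}\le C^{k+1}$ (the same weighted-$L^1$ estimate, with the number of multi-indices $\binom{k+d-1}{d-1}$ absorbed into $C^{k+1}$) then give $\|f\|_{L^p}\le C^{k+1}\lambda^{-k}\sup_{|\alpha|=k}\|\partial^\alpha f\|_{L^p}$, which is the claimed lower bound after rearranging. Thus the scaling and Young steps are routine, and the whole argument rests on the geometric-in-$k$ control of the kernel norms established in the third paragraph.
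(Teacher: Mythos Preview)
Your argument is correct and is precisely the standard proof of Bernstein's inequalities as given in \cite{bcd}. Note, however, that the paper does not supply its own proof of this lemma: it is stated without proof as a classical result, so there is nothing further to compare against.
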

\begin{lemma}[See \cite{bcd}]\label{inte}
Let $(s_1,s_2,p,r)\in \R^2\times [1,\infty]^2$, and $s_1<s_2,~0<\theta<1$, then we have
  \begin{align*}
    \|u\|_{B^{\theta s_1+(1-\theta)s_2}_{p,r}}\leq &\|u\|_{B^{s_1}_{p,r}}^\theta\|u\|_{B^{s_2}_{p,r}}^{1-\theta},\\
   \|u\|_{B^{\theta s_1+(1-\theta)s_2}_{p,1}}\leq &\fr{C}{s_2-s_1}\Bigl(\frac{1}{\theta}+\frac{1}{1-\theta}\Bigr)\|u\|_{B^{s_1}_{p,\infty}}^\theta\|u\|_{B^{s_2}_{p,\infty}}^{1-\theta}.
      \end{align*}
\end{lemma}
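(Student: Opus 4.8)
The plan is to reduce everything to estimates on individual Littlewood--Paley pieces and then sum in $j$; both inequalities are instances of the log-convexity of $s\mapsto\log\|u\|_{B^s_{p,r}}$, the second being a quantitative refinement that trades an $\ell^1$-summation in $j$ for the spectral gap $s_2-s_1$.

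\emph{The first inequality.} Set $s=\theta s_1+(1-\theta)s_2$. Since $2^{js}=2^{\theta js_1}2^{(1-\theta)js_2}$, for every $j\ge-1$ we have the pointwise (in $j$) identity
\[
2^{js}\|\Delta_j u\|_{L^p}=\bigl(2^{js_1}\|\Delta_j u\|_{L^p}\bigr)^{\theta}\bigl(2^{js_2}\|\Delta_j u\|_{L^p}\bigr)^{1-\theta}.
\]
For $1\le r<\infty$ I would raise this to the power $r$, sum over $j$, and apply H\"older's inequality for the counting measure with conjugate exponents $1/\theta$ and $1/(1-\theta)$; this gives $\|u\|_{B^s_{p,r}}^{r}\le\|u\|_{B^{s_1}_{p,r}}^{\theta r}\|u\|_{B^{s_2}_{p,r}}^{(1-\theta)r}$, and extracting $r$-th roots yields the claim. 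For $r=\infty$ the bound is immediate since $\sup_j a_j^{\theta}b_j^{1-\theta}\le(\sup_j a_j)^{\theta}(\sup_j b_j)^{1-\theta}$ for nonnegative sequences.

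\emph{The second inequality.} Here I would discard the trivial cases (one of $\|u\|_{B^{s_i}_{p,\infty}}$ zero or infinite) and split the defining series $\|u\|_{B^s_{p,1}}=\sum_{j\ge-1}2^{js}\|\Delta_j u\|_{L^p}$ at an integer threshold $N$ to be optimized. Using $s-s_1=(1-\theta)(s_2-s_1)>0$ and $s_2-s=\theta(s_2-s_1)>0$, for $j\le N$ bound $2^{js}\|\Delta_j u\|_{L^p}=2^{j(s-s_1)}\bigl(2^{js_1}\|\Delta_j u\|_{L^p}\bigr)\le 2^{j(s-s_1)}\|u\|_{B^{s_1}_{p,\infty}}$, and for $j>N$ bound $2^{js}\|\Delta_j u\|_{L^p}\le 2^{-j(s_2-s)}\|u\|_{B^{s_2}_{p,\infty}}$. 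Summing the two geometric series, with $\sum_{j\le N}2^{j\sigma}\le 2^{(N+1)\sigma}/(2^{\sigma}-1)$ and the convexity estimate $2^{\sigma}-1\ge\sigma\ln 2$ (together with the analogous tail bound for the second series), gives
\[
\|u\|_{B^s_{p,1}}\le C\,\frac{2^{N(s-s_1)}}{s-s_1}\,\|u\|_{B^{s_1}_{p,\infty}}+C\,\frac{2^{-N(s_2-s)}}{s_2-s}\,\|u\|_{B^{s_2}_{p,\infty}}.
\]
Finally I would take $N$ to be the integer nearest to $(s_2-s_1)^{-1}\log_2\bigl(\|u\|_{B^{s_2}_{p,\infty}}/\|u\|_{B^{s_1}_{p,\infty}}\bigr)$, which balances the two summands; because $(s-s_1)/(s_2-s_1)=1-\theta$ and $(s_2-s)/(s_2-s_1)=\theta$, each summand is then $\lesssim\|u\|_{B^{s_1}_{p,\infty}}^{\theta}\|u\|_{B^{s_2}_{p,\infty}}^{1-\theta}$ up to the prefactor, and $\frac1{s-s_1}+\frac1{s_2-s}=\frac1{s_2-s_1}\bigl(\frac1\theta+\frac1{1-\theta}\bigr)$ is exactly the constant in the statement.

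There is no genuine obstacle in this lemma---it is a standard consequence of the Littlewood--Paley characterization of Besov norms, see \cite{bcd}. The only mild care needed is the rounding of the optimal level $N$ to an integer (which costs only a universal multiplicative factor, and the degenerate cases are immediate) and keeping track of the geometric-series constants, for which the elementary inequality $2^{\sigma}-1\ge\sigma\ln 2$ for $\sigma\ge0$ is precisely what converts the factors $1/(2^{s-s_1}-1)$ and $1/(1-2^{-(s_2-s)})$ into the claimed $1/(s-s_1)$ and $1/(s_2-s)$.
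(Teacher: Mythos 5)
Your proof is correct and complete: the H\"older-in-$j$ argument for the first inequality and the split-at-a-threshold-$N$-and-optimize argument for the second (with the elementary bound $2^{\sigma}-1\ge\sigma\ln 2$ producing the $1/(s_2-s_1)$ and $1/\theta+1/(1-\theta)$ factors) are exactly the standard proof. The paper itself offers no proof, citing Bahouri--Chemin--Danchin, and your argument is precisely the one found there, so there is nothing to compare beyond noting agreement.
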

Then, we give some important product estimates which will be used throughout the paper.
\begin{lemma}[See \cite{bcd}]\label{pe}
  For $(p,r)\in[1, \infty]^2$ and $s>0$, $B^s_{p,r}(\R^d)\cap L^\infty(\R^d)$ is an algebra. Moreover, for any $u,v \in B^s_{p,r}(\R^d)\cap L^\infty(\R^d)$, we have
  \bbal
  &\|uv\|_{B^{s}_{p,r}}\leq C(\|u\|_{B^{s}_{p,r}}\|v\|_{L^\infty}+\|v\|_{B^{s}_{p,r}}\|u\|_{L^\infty}).
  \end{align*}
In addition, if $s>\max\big\{1+\frac d p,\frac32\big\}$, then
    \begin{align*}
    &\|uv\|_{B^{s-2}_{p,r}(\R^{d})}\leq C\|u\|_{B^{s-2}_{p,r}(\R^{d})}\|v\|_{B^{s-1}_{p,r}(\R^{d})}.
    \end{align*}
    \end{lemma}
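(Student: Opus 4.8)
The plan is to reduce both inequalities to Bony's paraproduct decomposition
\[
uv=T_uv+T_vu+R(u,v),\qquad T_uv=\sum_{j}S_{j-1}u\,\Delta_jv,\quad R(u,v)=\sum_{|j-j'|\le1}\Delta_ju\,\Delta_{j'}v,
\]
and then to prove three continuity estimates for the bilinear operators $T$ and $R$. Each of these is obtained by localizing the relevant dyadic pieces in frequency, applying Bernstein's inequality to pass between $L^p$ and $L^\infty$, and summing the resulting dyadic series by a convolution (Young) inequality. Once the building blocks are in place, both assertions follow by bookkeeping.

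First I would establish the building blocks. Since $S_{j-1}u\,\Delta_jv$ is spectrally supported in an annulus $2^j\mathcal{C}$, Hölder's inequality together with $\|S_{j-1}u\|_{L^\infty}\le C\|u\|_{L^\infty}$ gives, for every $\sigma\in\R$,
\[
\|T_uv\|_{B^\sigma_{p,r}}\lesssim\|u\|_{L^\infty}\|v\|_{B^\sigma_{p,r}},
\]
while bounding instead $\|S_{j-1}u\|_{L^\infty}\lesssim 2^{-jt}\|u\|_{B^t_{\infty,\infty}}$ for $t<0$ yields the gain version $\|T_uv\|_{B^{\sigma+t}_{p,r}}\lesssim\|u\|_{B^t_{\infty,\infty}}\|v\|_{B^\sigma_{p,r}}$. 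For the remainder, $\Delta_ju\,\Delta_{j'}v$ with $|j-j'|\le1$ is supported in a ball $2^j\mathcal{B}$, so $\Delta_qR$ only collects indices $j\gtrsim q$; estimating $2^{q(s_1+s_2)}\|\Delta_qR\|_{L^p}$ by Hölder with $\tfrac1p=\tfrac1{p_1}+\tfrac1{p_2}$ and using $s_1+s_2>0$ to sum the geometric tail (the third indices combining by the natural Hölder relation) gives
\[
\|R(u,v)\|_{B^{s_1+s_2}_{p,r}}\lesssim\|u\|_{B^{s_1}_{p_1,r_1}}\|v\|_{B^{s_2}_{p_2,r_2}},\qquad s_1+s_2>0.
\]

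With these in hand the first inequality is immediate. Taking $\sigma=s$ in the two paraproduct bounds controls $T_uv$ and $T_vu$, and taking $p_1=\infty,\ p_2=p,\ s_1=0,\ s_2=s$ in the remainder bound (using $L^\infty\hookrightarrow B^0_{\infty,\infty}$ and $s>0$) controls $R(u,v)$. Adding the three pieces yields
\[
\|uv\|_{B^s_{p,r}}\lesssim\|u\|_{L^\infty}\|v\|_{B^s_{p,r}}+\|v\|_{L^\infty}\|u\|_{B^s_{p,r}},
\]
and combined with $\|uv\|_{L^\infty}\le\|u\|_{L^\infty}\|v\|_{L^\infty}$ this shows that $B^s_{p,r}\cap L^\infty$ is an algebra.

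For the second inequality I would again split $uv=T_vu+T_uv+R(u,v)$, exploiting that $v$ is one derivative smoother. The paraproduct $T_vu$ is handled by the first block, $\|T_vu\|_{B^{s-2}_{p,r}}\lesssim\|v\|_{L^\infty}\|u\|_{B^{s-2}_{p,r}}$, where $s-1>\tfrac dp$ gives $\|v\|_{L^\infty}\lesssim\|v\|_{B^{s-1}_{p,r}}$. The paraproduct $T_uv$ is handled by the gain block with $\sigma=s-1,\ t=-1$, together with the embedding $B^{s-2}_{p,r}\hookrightarrow B^{s-2-d/p}_{\infty,\infty}\hookrightarrow B^{-1}_{\infty,\infty}$, which is exactly where $s\ge1+\tfrac dp$ enters. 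The remainder is the crux: applying the remainder block with $p_1=p_2=p$ and $s_1=s-2,\ s_2=s-1$ maps $R(u,v)$ into $B^{2s-3}_{p/2,\cdot}$, and the Sobolev embedding $B^{2s-3}_{p/2,\cdot}\hookrightarrow B^{2s-3-d/p}_{p,\cdot}\hookrightarrow B^{s-2}_{p,r}$ (valid since $2s-3-\tfrac dp\ge s-2$) returns it to the target space. Here the positivity requirement $s_1+s_2=2s-3>0$ forces $s>\tfrac32$, while the embedding cost $\tfrac dp$ forces $s\ge1+\tfrac dp$ for the output index to reach $s-2$; these two constraints are precisely the hypothesis $s>\max\{1+\tfrac dp,\tfrac32\}$. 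I expect this remainder term to be the main obstacle, since it is the only piece in which neither factor may be placed in $L^\infty$ and one must instead spend genuine regularity, so that the convergence of the dyadic series — and hence the threshold $s>\tfrac32$ — is unavoidable. Summing the three contributions then gives $\|uv\|_{B^{s-2}_{p,r}}\lesssim\|u\|_{B^{s-2}_{p,r}}\|v\|_{B^{s-1}_{p,r}}$.
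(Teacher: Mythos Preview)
The paper does not supply a proof of this lemma; it is quoted verbatim from \cite{bcd}. Your argument is exactly the standard paraproduct proof found there (Theorems~2.82 and~2.85 for the continuity of $T$ and $R$, and the ensuing corollaries for product laws), so in substance the approaches coincide.

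One technical point is worth flagging in your remainder step for the second inequality. Taking $p_1=p_2=p$ puts $R(u,v)$ in $B^{2s-3}_{p/2,r}$, which for $1\le p<2$ is only a quasi-Banach space and therefore lies outside the framework of \cite{bcd} as written. The fix is cheap: for $p<2$ use instead the H\"older split $\tfrac1p+\tfrac1{p'}=1$, placing $v\in B^{s-1}_{p,r}\hookrightarrow B^{\,s-1-d(2/p-1)}_{p',r}$; the remainder then lands in $B^{2s-3-d(2/p-1)}_{1,r}$, and the positivity condition $2s-3>d(2/p-1)$ follows from $s>1+d/p$ and $d\ge1$, after which the embedding back into $B^{s-2}_{p,r}$ costs exactly $d(1-1/p)$ and closes as before. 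With this adjustment your bookkeeping is correct and the two thresholds $s>3/2$ and $s>1+d/p$ arise precisely where you indicate.
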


We will use the following modified commutator estimates.
\begin{lemma}[{\bf Modified Commutator Estimates}]\label{mce}
  Let $\la_\alpha:=(1-\Delta)^{s/2}(1-\alpha^2\Delta)^{-1},$ and denote $\la=\la_0=(1-\Delta)^{s/2}.$ For any $s>0$ and $\na v \in L^\infty\cap H^{s-1}(\R^d),~u \in H^s(\R^d) $, we have
  \bbal
  &\|[\la_\alpha,v\cdot\nabla]u\|_{L^2}\leq C\big(\|\na v\|_{L^\infty}\|u\|_{H^s}+\|\na v\|_{H^{s-1}}\|\na u\|_{L^\infty}\big).
  \end{align*}
if $s< 1+\frac{d}2 $, and $\na v \in L^\infty\cap H^{d/2}(\R^d),~u \in H^s(\R^d) $, we also have
\bbal
&\|[\la_\alpha,v\cdot\nabla]u\|_{L^2}\leq C\|\na v\|_{L^\infty\cap H^{d/2}}\|u\|_{H^s}.
\end{align*}
\end{lemma}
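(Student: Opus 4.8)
\textbf{Proof plan for the Modified Commutator Estimates (Lemma~\ref{mce}).}

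The plan is to reduce the problem to the classical Kato--Ponce-type commutator estimate for $\varLambda^s=(1-\Delta)^{s/2}$ by treating the Fourier multiplier $(1-\alpha^2\Delta)^{-1}$ as an operator that commutes with $v\cdot\nabla$ only up to a controlled remainder, and then to extract uniformity in $\alpha$ from the fact that $(1-\alpha^2\Delta)^{-1}$ is a contraction on every $L^2$-based space with symbol bounded by $1$. Concretely, I would write
$[\la_\alpha,v\cdot\nabla]u = (1-\alpha^2\Delta)^{-1}[\la,v\cdot\nabla]u + [\,(1-\alpha^2\Delta)^{-1},v\cdot\nabla\,]\la u.$
Wait --- more cleanly, since $\la_\alpha=\la\circ(1-\alpha^2\Delta)^{-1}=(1-\alpha^2\Delta)^{-1}\circ\la$ (both are Fourier multipliers), one has
$[\la_\alpha,v\cdot\nabla]u=\la\,[(1-\alpha^2\Delta)^{-1},v\cdot\nabla]u+(1-\alpha^2\Delta)^{-1}[\la,v\cdot\nabla]u.$
The second summand is immediately bounded by $\|[\la,v\cdot\nabla]u\|_{L^2}$ because $\|(1-\alpha^2\Delta)^{-1}f\|_{L^2}\le\|f\|_{L^2}$ uniformly in $\alpha$, and this is exactly the classical commutator estimate (Lemma 2.100 in \cite{bcd}), which gives the stated right-hand side. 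So the entire difficulty is concentrated in the first summand $\la\,[(1-\alpha^2\Delta)^{-1},v\cdot\nabla]u$.

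For that term I would perform a Littlewood--Paley/paradifferential decomposition of the commutator $[(1-\alpha^2\Delta)^{-1},v\cdot\nabla]u$, splitting $v\cdot\nabla u$ into paraproducts $T_v(\nabla u)$, $T_{\nabla u}v$ and the remainder $R(v,\nabla u)$, and commute each piece with the smoothing multiplier $p_\alpha(D):=(1-\alpha^2\Delta)^{-1}$. The key technical point is a Bernstein-type bound on the symbol: on a dyadic annulus $\{|\xi|\sim 2^j\}$ the symbol $p_\alpha(\xi)=(1+\alpha^2|\xi|^2)^{-1}$ satisfies $|p_\alpha(\xi)|\le\min\{1,\alpha^{-2}2^{-2j}\}$ and, more importantly for commutators, its gradient obeys $|\nabla p_\alpha(\xi)|\le C\,\alpha^2\,2^{j}\,p_\alpha(\xi)^{2}\cdot 2^{j}\lesssim 2^{-j}$ uniformly in $\alpha$ after the obvious rescaling $\eta=\alpha\xi$; i.e.\ $p_\alpha$ behaves like a symbol of order $-1$ with constants independent of $\alpha$. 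This is the analogue of the standard observation that $\operatorname{div}\M(u,u)$ picks up the good $\alpha^2$-weight; here the commutator with $v\cdot\nabla$ effectively trades one derivative against $p_\alpha$ being order $-1$, so that $\la\,[p_\alpha(D),v\cdot\nabla]$ maps like an operator of order $s-1+1-1=s-1$ applied to $u$ against one derivative of $v$ --- matching the two terms $\|\na v\|_{L^\infty}\|u\|_{H^s}$ and $\|\na v\|_{H^{s-1}}\|\na u\|_{L^\infty}$. In the low-regularity regime $s<1+\frac d2$ the remainder term $R(v,\nabla u)$ is no longer summable against $\|\na v\|_{H^{s-1}}$, which is why one must upgrade to $\na v\in H^{d/2}$; there I would use that $H^{d/2}\hookrightarrow B^{0}_{\infty,\infty}$ fails only logarithmically and instead estimate $R$ directly in $L^2$ using $\|\na v\|_{L^\infty}$ for the high-high interactions and $\|\na v\|_{H^{d/2}}$ to close the summation, exactly as in the second displayed inequality.

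I expect the main obstacle to be the uniform-in-$\alpha$ symbolic calculus for the commutator $[p_\alpha(D),v\cdot\nabla]$: one must show that although $p_\alpha(D)\to\mathrm{Id}$ as $\alpha\to0$ (so the commutator does not vanish and one cannot simply throw it away), the commutator structure always gains exactly one derivative with a constant independent of $\alpha$. The cleanest way I know to make this rigorous is to write, for the $T_v(\nabla u)$-piece, a first-order Taylor expansion of the symbol $p_\alpha(\xi-\zeta)-p_\alpha(\xi)$ in the frequency $\zeta$ of $v$ (which is the low frequency in this paraproduct), producing a kernel representation whose $L^1$-norm in the physical translation variable is $O(2^{-j})$ uniformly in $\alpha$ by the scaling $\eta=\alpha\xi$ argument above; then Young's inequality and the usual dyadic summation (using $s>0$ for the high-frequency summation of $u$, and $s-1\ge -d/2$, resp.\ $s\le d/2$, for the two regimes) yield the result. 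The bookkeeping of which norm of $v$ and which norm of $u$ each of the three paraproduct pieces consumes is routine once this uniform kernel bound is in hand, so I would state the kernel bound as a short sublemma and then run the standard commutator argument of \cite[Chapter~2]{bcd} essentially verbatim.
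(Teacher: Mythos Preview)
Your strategy can be made to work, but the paper takes a much shorter route. It observes that $(1-\alpha^2\Delta)^{-1}$ is convolution with the rescaled Bessel kernel $\alpha^{-d}K(\cdot/\alpha)$, whose $L^1$ norm equals $\|K\|_{L^1}$ independently of $\alpha$; hence $(1-\alpha^2\Delta)^{-1}$ is uniformly bounded on every $L^p$, $1\le p\le\infty$. With this single fact in hand the paper simply re-runs the proof of the classical commutator estimate (Lemma~2.100 of \cite{bcd}) for the composite multiplier $\la_\alpha$ in place of $\la$: every step of that Littlewood--Paley argument uses only $L^p$ bounds and Bernstein inequalities for the multipliers involved, and inserting the uniformly $L^p$-bounded factor $p_\alpha(D)$ leaves all constants $\alpha$-independent. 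No separate treatment of $[p_\alpha(D),v\cdot\nabla]$ is required at all.

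Your decomposition approach is a genuine alternative, but two corrections are in order. First, the ``more cleanly'' identity is miswritten: the Leibniz rule $[AB,C]=A[B,C]+[A,C]B$ with $A=p_\alpha(D)$, $B=\la$ yields your \emph{first} displayed form
\[
[\la_\alpha,v\cdot\nabla]u=p_\alpha(D)[\la,v\cdot\nabla]u+[p_\alpha(D),v\cdot\nabla]\la u,
\]
and it is this one you should keep (the version with $\la$ outside the $p_\alpha$-commutator mixes the two orderings). Second, with the correct ordering the remainder $[p_\alpha(D),v\cdot\nabla]\la u=\sum_j[p_\alpha(D),v^j]\partial_j\la u$ already sits in $L^2$ and only needs the zero-order commutator bound $\|[p_\alpha(D),v^j]g\|_{L^2}\lesssim\|\nabla v\|_{L^\infty}\|g\|_{H^{-1}}$, which follows from your (correct) uniform gradient estimate $|\nabla p_\alpha(\xi)|\lesssim|\xi|^{-1}$; note however that this makes $p_\alpha$ a uniform $S^0_{1,0}$ symbol, not order $-1$ as you wrote, since $p_\alpha\to 1$ as $\alpha\to0$. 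This bypasses the full paraproduct decomposition you sketch for $\la[p_\alpha(D),v\cdot\nabla]u$. What your modular argument buys is an explicit isolation of the $\alpha$-dependent piece; what the paper's argument buys is brevity, at the cost of asking the reader to revisit the proof in \cite{bcd}.
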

\begin{proof}
Since the Bessel potential $K(x)=\mathcal{F}^{-1}\big((1+|\xi|^2)^{-1}\big)(x)\in L^1\cap L^q(\R^d)$ for any $1<q<\frac{d}{d-2}$ (or $1<q<\infty$ when $d=2$), see Li et al. \cite{lyzz}. We  then have a uniform bound for the operator $(1-\alpha^2\Delta)^{-1}$ in $L^p(\R^d),~1\leq p\leq\infty$ with respect to $\alpha\in[0,\al_0]$ as 
$$\|(1-\alpha^2\Delta)^{-1}u\|_{L^p}=\|\al^{-d}K(\frac{\cdot}\al)\ast u\|_{L^p}\leq \|\al^{-d}K(\frac{\cdot}\al)\|_{L^1}\|u\|_{L^p}=\|K\|_{L^1}\|u\|_{L^p}.$$
The rest of the proof follows directly from Lemma 2.100 of \cite{bcd}, we shall omit it here.
\end{proof}
\section{Proof of the main theorem }\label{sec3}

We first recall the local existence and uniqueness theory of solutions for the Cauchy problem \eqref{E-P} for fixed $\al=1$ in Besov spaces \cite{yy}, then provide some technical lemmas and propositions.
\subsection{Preparation and technical lemmas}
\begin{lemma}[Local well-posedness \cite{yy}]\label{hlocal}
Assume that 
\begin{align}\label{dprs}
  d\in \mathbb N_+, 1\leq p,r\leq\infty~ and ~s>\max\{1+\frac{d}{p},\frac{3}{2}\}. 
  \end{align}
  Let $u_{0}\in B^{s}_{p,r}(\mathbb{R}^{d})$, then there exists a time $T=T(\|u_{0}\|_{B^{s}_{p,r}(\mathbb{R}^{d})})>0$ such that for $\al=1$ the system \eqref{E-P} has a unique solution in
\begin{equation*}
  \begin{cases}
  C([0,T];B^{s}_{p,r}(\R^d))\cap C^1([0,T];B^{s-1}_{p,r}(\R^d)), &if~~ r<\infty,\\
  L^\infty([0,T];B^{s}_{p,\infty}(\R^d))\cap Lip([0,T];B^{s-1}_{p,\infty}(\R^d)), &if~~ r=\infty.
  \end{cases}
\end{equation*}
 And the mapping $u_0\mapsto u$ is continuous from $B^{s}_{p,r}(\R^d)$ into 
$C([0,T];B^{s'}_{p,r}(\R^d))\cap C^1([0,T];B^{s'-1}_{p,r}(\R^d))$
for all $s'<s$ if $r=\infty$, and $s'=s$ otherwise. Moreover, for all $t\in[0,T]$, there holds
\begin{align*}
\|u(t)\|_{B^{s}_{p,r}(\mathbb{R}^{d})}\leq C\|u_{0}\|_{B^{s}_{p,r}(\mathbb{R}^{d})}.
\end{align*}
\end{lemma}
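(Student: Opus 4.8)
The plan is to follow the classical Littlewood--Paley scheme for quasilinear transport equations (as developed in \cite{bcd}), applied to the equivalent nonlocal formulation \eqref{lns}. Write the system (with $\al=1$) as $\partial_t u+u\cdot\nabla u=F(u)$, where $F(u):=-(I-\Delta)^{-1}\big(\div\M(u,u)+\N(u,u)\big)$. The first task is to record the right estimates on the source term. Since $(I-\Delta)^{-1}$ is of order $-2$ and $(I-\Delta)^{-1}\div$ of order $-1$ on Besov spaces, and since $s>1+\frac dp$ gives $\nabla u\in B^{s-1}_{p,r}\hookrightarrow L^\infty$, the product law Lemma \ref{pe} yields $\M(u,u)\in B^{s-1}_{p,r}$ and $\N(u,u)\in B^{s-1}_{p,r}$, hence $\|F(u)\|_{B^s_{p,r}}\leq C\|u\|_{B^s_{p,r}}^2$. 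Using bilinearity and symmetry of $\M,\N$ to write $\M(u,u)-\M(v,v)=\M(u-v,u)+\M(v,u-v)$ and likewise for $\N$, together with the low-order product estimate $\|fg\|_{B^{s-2}_{p,r}}\leq C\|f\|_{B^{s-2}_{p,r}}\|g\|_{B^{s-1}_{p,r}}$ valid for $s>\max\{1+\frac dp,\frac32\}$, one also gets the Lipschitz-type bound $\|F(u)-F(v)\|_{B^{s-1}_{p,r}}\leq C\big(\|u\|_{B^s_{p,r}}+\|v\|_{B^s_{p,r}}\big)\|u-v\|_{B^{s-1}_{p,r}}$.

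Next I would set up an iterative scheme: let $u^{(0)}:=S_0u_0$ and solve inductively the linear transport problems $\partial_t u^{(n+1)}+u^{(n)}\cdot\nabla u^{(n+1)}=F(u^{(n)})$ with $u^{(n+1)}|_{t=0}=u_0$. The Besov transport estimate gives $\|u^{(n+1)}(t)\|_{B^s_{p,r}}\leq e^{C\int_0^t\|\nabla u^{(n)}\|_{B^{s-1}_{p,r}}\,d\tau}\big(\|u_0\|_{B^s_{p,r}}+\int_0^t\|F(u^{(n)})\|_{B^s_{p,r}}\,d\tau\big)$, which together with the first step produces a Riccati-type inequality. Hence there is $T=T(\|u_0\|_{B^s_{p,r}})>0$ for which $(u^{(n)})_n$ is bounded in $C([0,T];B^s_{p,r})$ (resp.\ $L^\infty([0,T];B^s_{p,\infty})$ when $r=\infty$), with a uniform bound of the form $\|u^{(n)}(t)\|_{B^s_{p,r}}\leq C\|u_0\|_{B^s_{p,r}}$, and, via the equation, equicontinuous in time with values in $B^{s-1}_{p,r}$.

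Then I would prove that $(u^{(n)})_n$ is Cauchy in $C([0,T];B^{s-1}_{p,r})$: subtract consecutive equations, apply the transport estimate at regularity $s-1$, control the transport-defect term $\|(u^{(n)}-u^{(n-1)})\cdot\nabla u^{(n)}\|_{B^{s-1}_{p,r}}$ by the uniform bound on $\nabla u^{(n)}\in B^{s-1}_{p,r}$ (using $s-1>\frac dp$), and use the Lipschitz bound on $F$ from the first step; a Grönwall argument closes the estimate after possibly shrinking $T$. The limit $u$ solves $\partial_t u+u\cdot\nabla u=F(u)$; Fatou in the high norm plus the continuity properties of the Besov transport flow upgrade $u\in L^\infty([0,T];B^s_{p,r})$ to $u\in C([0,T];B^s_{p,r})\cap C^1([0,T];B^{s-1}_{p,r})$ when $r<\infty$ (and to the $\mathrm{Lip}$ statement when $r=\infty$), and the bound $\|u(t)\|_{B^s_{p,r}}\leq C\|u_0\|_{B^s_{p,r}}$ is inherited from the uniform estimate. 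Uniqueness follows from the same $B^{s-1}_{p,r}$ energy estimate for the difference of two solutions and Grönwall.

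Finally, for the data-to-solution continuity: when $r=\infty$, continuity into $C([0,T];B^{s'}_{p,\infty})$ with $s'<s$ is obtained by interpolating (Lemma \ref{inte}) the difference bound in $B^{s-1}_{p,\infty}$ against the uniform bound in $B^s_{p,\infty}$. The delicate point --- and what I expect to be the main obstacle --- is continuity up to the endpoint index $s$ when $r<\infty$. This requires a Bona--Smith type argument: mollify the data by $u_0^{(n)}:=S_nu_0$, note that the corresponding solutions stay uniformly bounded in a higher Besov norm while $u_0^{(n)}\to u_0$ with an explicit rate in lower norms, and then carefully estimate the difference $u-u^{(n)}$ to order $s$, handling the commutator terms that appear when the transport operator is commuted past $\Lambda^s$ (this is precisely where modified commutator estimates in the spirit of Lemma \ref{mce} enter) and the term $\|u_0-u_0^{(n)}\|_{B^s_{p,r}}\to 0$. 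Everything preceding this last step is routine once the order bookkeeping for the multipliers $(I-\Delta)^{-1}$, $(I-\Delta)^{-1}\div$ is in place; the Bona--Smith endpoint argument is the technically heaviest part.
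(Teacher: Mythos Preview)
Your sketch is correct and follows the standard Littlewood--Paley/transport scheme for quasilinear equations in Besov spaces; this is exactly the route taken in the cited reference \cite{yy}. Note, however, that the present paper does not give its own proof of this lemma: it is simply recalled from \cite{yy} as a known result, so there is nothing to compare against beyond observing that your outline matches the classical argument underlying that reference.
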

Thanks to the modified commutator estimates provided in Lemma \ref{mce}, we can deduce the following global convexity lemma, which is a key observation of our proof and will be repeatedly refered throughout the paper.
\begin{lemma}[{\bf Global Convexity}]\label{gce}
  Let $~\N(u,v)=\frac12\big(u~\div v+\nabla v^T u+ v~\div u+\nabla u^T v \big)$ as defined in \eqref{qr}, and denote $\la_\alpha:=(1-\Delta)^{s/2}(1-\alpha^2\Delta)^{-1},$  $\la:=(1-\Delta)^{s/2}.$ Then, for any $s>0,$ and $\na u \in L^\infty\ (\R^d),~u \in H^s(\R^d) $, we have
  \begin{align}
  \big|\langle\la_\alpha\N(u,u),\la u\rangle_{L^2}\big|&\leq C\|\na u \|_{L^\infty}\|u\|^2_{H^s}.\label{gc1}\tag{i}
  \end{align}
More generally, if $~\na v \in L^\infty \cap H^{d/2} \cap H^s(\R^d)$ and $u \in H^s(\R^d) $, we have
  \begin{align}
    \big|\langle\la_\alpha\N(v,u),\la u\rangle_{L^2}\big|&\leq C\big(\|\na v\|_{L^\infty\cap H^{d/2}\cap H^{s-1}}\|u\|_{H^{s}}+\|\na v\|_{H^{s}}\|u\|_{L^\infty}\big)\|u\|_{H^{s}}.\label{gc2}\tag{ii}
  \end{align}
In case $s> 1+\frac{d}2 $, as $ H^{s-1}(\R^d)\hookrightarrow L^\infty\cap H^{d/2}(\R^d),$ then we have
\begin{align}
  \big|\langle\la_\alpha\N(u,u),\la u\rangle_{L^2}\big|&\leq  C\|u\|^3_{H^s},\label{gc3}\tag{iii}\\
  \big|\langle\la_\alpha\N(v,u),\la u\rangle_{L^2}\big|&\leq C\big(\|v\|_{H^{s}}\|u\|_{H^{s}}^2+\|v\|_{H^{s+1}}\|u\|_{H^{s-1}}\|u\|_{H^{s}}\big).\label{gc4}\tag{iv}
    \end{align}
\end{lemma}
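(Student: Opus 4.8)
The key observation is that $\N(u,v)$ is built so that its ``worst'' term, when paired against $\la u$, produces a perfect derivative that integrates to zero or to a harmless $\|\na u\|_{L^\infty}$-controlled remainder. The plan is to split
$$\langle\la_\alpha\N(u,u),\la u\rangle_{L^2}=\langle[\la_\alpha,u\cdot\na]u,\la u\rangle_{L^2}+\big\langle\la_\alpha\big(\N(u,u)-u\cdot\na u\big),\la u\big\rangle_{L^2}+\langle u\cdot\na\la_\alpha u-\la_\alpha(u\cdot\na u)+\cdots\rangle$$
— more precisely, to write $\N(u,u)=u\,\div u+\na u^T u$ and recognize that $\na u^T u=\tfrac12\na|u|^2$, while $u\,\div u$ combines with the transport term. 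So first I would reduce everything to three pieces: (a) a commutator $[\la_\alpha,u\cdot\na]u$ paired with $\la u$, estimated directly by Lemma \ref{mce} (the modified commutator estimate) as $\le C(\|\na u\|_{L^\infty}\|u\|_{H^s}+\|\na u\|_{H^{s-1}}\|\na u\|_{L^\infty})\|u\|_{H^s}$, which already gives the shape of \eqref{gc1} once $\|\na u\|_{H^{s-1}}\le\|u\|_{H^s}$; (b) the term coming from $\la_\alpha(u\cdot\na u)$ paired with $\la u$, which after moving $u\cdot\na$ through $\la$ and integrating by parts on $\R^d$ yields $\tfrac12\langle(\div u)\la_\alpha u,\la u\rangle$ plus an $\la_\alpha$–vs–$\la$ discrepancy controlled again by the commutator; (c) the genuinely lower-order pieces $(\div u)u$ and $\na u^T u$, where I use the product/algebra estimate Lemma \ref{pe} and the uniform $L^p$-boundedness of $(1-\alpha^2\De)^{-1}$ established in the proof of Lemma \ref{mce}.

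For \eqref{gc1}, the point is that \emph{every} term that is not already $\|\na u\|_{L^\infty}$-controlled is a commutator of the type handled by Lemma \ref{mce}, and the $\|\na u\|_{H^{s-1}}\|\na u\|_{L^\infty}$ contribution is bounded by $\|u\|_{H^s}^2\|\na u\|_{L^\infty}$ after one interpolation, so the whole thing collapses to $C\|\na u\|_{L^\infty}\|u\|_{H^s}^2$. For the bilinear version \eqref{gc2}, I would repeat the same decomposition but now with $v$ in the ``transport/coefficient'' slot and $u$ in the ``transported'' slot; the commutator $[\la_\alpha,v\cdot\na]u$ is estimated by Lemma \ref{mce}, giving $\|\na v\|_{L^\infty}\|u\|_{H^s}+\|\na v\|_{H^{s-1}}\|\na u\|_{L^\infty}$, and one must also control terms where a full $s$ derivatives land on $v$: these are exactly the source of the extra $\|\na v\|_{H^s}\|u\|_{L^\infty}$ on the right-hand side of \eqref{gc2}, handled via Lemma \ref{pe} (the algebra estimate with one factor in $L^\infty$) together with the uniform bound on $(1-\alpha^2\De)^{-1}$. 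In the regime $s<1+d/2$ one uses the second branch of Lemma \ref{mce} and the $H^{d/2}$ norm of $\na v$ in place of $\|\na v\|_{H^{s-1}}$; the hypotheses of \eqref{gc2} list $\|\na v\|_{L^\infty\cap H^{d/2}\cap H^{s-1}}$ precisely so that both ranges of $s$ are covered uniformly.

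Finally \eqref{gc3} and \eqref{gc4} are immediate specializations: when $s>1+d/2$ the Sobolev embedding $H^{s-1}(\R^d)\hookrightarrow L^\infty\cap H^{d/2}(\R^d)$ lets me replace $\|\na u\|_{L^\infty}$ by $\|\na u\|_{H^{s-1}}\le\|u\|_{H^s}$ and $\|\na v\|_{L^\infty\cap H^{d/2}\cap H^{s-1}}$ by $\|\na v\|_{H^{s-1}}\le\|v\|_{H^s}$, while $\|\na v\|_{H^s}\le\|v\|_{H^{s+1}}$ and $\|u\|_{L^\infty}\le\|u\|_{H^{s-1}}$ produce the $\|v\|_{H^{s+1}}\|u\|_{H^{s-1}}\|u\|_{H^s}$ term in \eqref{gc4}. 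I expect the main obstacle to be bookkeeping: one must verify that after the integration by parts on $\R^d$ there really is no term carrying a bare (i.e.\ non-$\alpha^2$-weighted) factor of $\la_\alpha$ of full order $s$ acting on $u$ that survives against $\la u$ without being either a perfect divergence or a commutator — in other words, confirming that the special algebraic structure of $\N$ (the cancellation between $\na u^T u$ producing $\tfrac12\na|u|^2$ and the symmetric companion terms) is exactly what kills the dangerous term. Once that cancellation is pinned down, the remaining estimates are routine applications of Lemma \ref{mce}, Lemma \ref{pe}, Lemma \ref{inte}, and the uniform-in-$\alpha$ $L^p$ bound for $(1-\alpha^2\De)^{-1}$.
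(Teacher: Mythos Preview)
Your plan has a genuine gap in the identification of the cancellation. You propose to route the argument through $u\cdot\nabla u$ (items (a) and (b)) and then treat ``the genuinely lower-order pieces $(\div u)u$ and $\nabla u^T u$'' by product estimates in item (c). But $(\div u)u + \nabla u^T u$ \emph{is} all of $\N(u,u)$, and neither piece is lower order: a direct Moser estimate gives at best
\[
\big|\langle\la_\alpha\N(u,u),\la u\rangle\big|\le C\|\N(u,u)\|_{H^s}\|u\|_{H^s}\le C\big(\|\nabla u\|_{L^\infty}\|u\|_{H^s}+\|u\|_{L^\infty}\|u\|_{H^{s+1}}\big)\|u\|_{H^s},
\]
which carries an illegal $\|u\|_{H^{s+1}}$. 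The transport term $u\cdot\nabla u$ does not appear in $\N(u,u)$ at all, so introducing the commutator $[\la_\alpha,u\cdot\nabla]u$ is extraneous; there is no ``transport term'' in the statement of the lemma to combine with.

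The cancellation the paper actually uses is \emph{between the two pieces of $\N(u,u)$ themselves}, and it hinges on two moves you do not mention. First, one exploits the self-adjointness of $(1-\alpha^2\Delta)^{-1}$ to shift it onto the other factor in the $\nabla u^T u$ piece, so that the two contributions read
\[
\sum_{i,j}\int\Big(\la_\alpha(u^i\partial_j u^j)\,\la u^i+\la(u^j\partial_i u^j)\,\la_\alpha u^i\Big)\dd x.
\]
Second, after extracting the commutators $[\la_\alpha,u^i\partial_j]u^j$ and $[\la,u^j\partial_i]u^j$ (note: these are \emph{not} the transport commutator $[\la_\alpha,u\cdot\nabla]u$), the surviving main terms are
\[
\sum_{i,j}\int\Big(u^i\partial_j\la_\alpha u^j\,\la u^i+u^j\partial_i\la u^j\,\la_\alpha u^i\Big)\dd x.
\]
Swapping the summation indices $i\leftrightarrow j$ in the second summand turns the integrand into $u^i\partial_j\big(\la_\alpha u^j\,\la u^i\big)$, a perfect derivative; one integration by parts then yields $-\int(\partial_j u^i)\la_\alpha u^j\,\la u^i\,\dd x$, bounded by $\|\nabla u\|_{L^\infty}\|u\|_{H^s}^2$. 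Without the self-adjointness shift and the index swap, the two main terms do \emph{not} combine into a total derivative, and your item (c) would be left holding the full $H^{s+1}$ loss. Your outline for \eqref{gc2}--\eqref{gc4} is fine once \eqref{gc1} is in hand, but the mechanism you describe for \eqref{gc1} does not close.
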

\begin{proof}
  We shall only prove \eqref{gc1} here, since \eqref{gc2} and \eqref{gc1} are similar. Firstly, we denote $\la_\alpha=(1-\Delta)^{s/2}(1-\alpha^2\Delta)^{-1},$ by the definition of $\N(u,v)$ in \ref{qr}, we have
  \begin{align}
    &~~~~ \big|\langle\la_\alpha\N(u,u),\la u\rangle_{L^2}\big|\nonumber\\
    &=\Big|\sum_{1\leq i,j\leq d}  \int_{\R^d}\Big(\la_\alpha(u^i\partial_ju^j) \la u^i+\la(u^j\partial_iu^j) \la_\alpha u^i\Big)\dd x\Big|\nonumber\\
    &=\Big|\underbrace{\sum_{i,j}  \int\big(u^i\partial_j\la_\alpha u^j\la u^i+u^j\partial_i\la u^j \la_\alpha u^i \big)\dd x}_{I_a}+\underbrace{\sum_{i,j}\int\big([\la_\alpha,u^i\partial_j]u^j \la u^i+[\la,u^j\partial_i]u^j \la_\alpha u^i\big)\dd x}_{I_b}\Big|,\nonumber
  \end{align}
  exchange the summation index $i,j$ in the second term of $I_a$, we have 
  \begin{align}
    |I_a|&=\Big|\sum_{1\leq i,j\leq d}  \int_{\R^d}\Big(u^i\partial_j\la_\alpha u^j\la u^i+u^i\partial_j\la u^i \la_\alpha u^j\Big)\dd x\Big|\nonumber\\
    &=\Big|\sum_{1\leq i,j\leq d}  \int_{\R^d}u^i\partial_j\big(\la_\alpha u^j\la u^i\big)\dd x\Big|\nonumber\\
    &=\Big|\sum_{1\leq i,j\leq d}  -\int_{\R^d}\partial_ju^i\big(\la_\alpha u^j\la u^i\big)\dd x\Big|\nonumber\\
    &\leq C\|\na u\|_{L^\infty}\|\la_\alpha u\|_{L^2}\|\la u\|_{L^2}\leq C\|\na u\|_{L^\infty}\|u\|^2_{{H}^{s}},\label{use31}
  \end{align}
  for the part $I_b,$ using commutator estimate in Lemma \ref{mce} to get that
  \begin{align}
    |I_b| & \leq \sum_{i,j}\Big(\|[\la_\alpha,u^i\partial_j]u^j\|_{L^2}\|\la u^i\|_{L^2}+\|[\la,u^j\partial_i]u^j\|_{L^2} \|\la_\alpha u^i\|_{L^2}\Big)\nonumber\\
    &\leq C\|\na u \|_{L^\infty}\|\na u\|_{{H}^{s-1}}\|u\|_{{H}^{s}}.\label{use32}
  \end{align}
  By combining  \eqref{use31}\eqref{use32}, we directly obtain \eqref{gc1}. The proof of \eqref{gc2} can refer to the estimation of \eqref{whs4} in Theorem \ref{th2} below. \eqref{gc3} and \eqref{gc4} are corollaries of \eqref{gc1} and \eqref{gc2}.
  \end{proof}

With the Local well-posedness and the Global Convexity lemma in hands, we are now in a position to begin the proof of Theorem \ref{th1} .

\subsection{Proof of Theorem \ref{th1}}

In this section, we will prove the uniform bound w.r.t. $\alpha\in[0,\al_0]$ of the solution $\mathbf{S}_{t}^{\alpha}(u_0)$ in $H^s$. For fixed $\alpha>0$, by the classical local well-posedness result, we known that there exists a $T_\alpha=T(\|u_0\|_{H^s},s,\alpha)>0$ such that the Euler-Poincaré equations \eqref{epa} has a unique solution $\mathbf{S}_{t}^{\alpha}(u_0)\in\mathcal{C}([0,T_\alpha];H^s)$. 

We shall prove that there exists a uniform time $T=T(\|u_0\|_{H^s},s,d)>0$ independent of $\alpha$ such that $T\leq T_{\alpha}$ and  
\begin{align}\label{m1}
\|\mathbf{S}_{t}^{\alpha}(u_0)\|_{L_T^{\infty} H^s} \leq 2\left\|u_0\right\|_{H^{s}}, \quad \forall (\alpha,t) \in[0,\al_0]\times [0,T].
\end{align}
Moreover, if $u_0 \in H^{s'}$ for some $s'>s$, then there exists $C_s=C(\|u_0\|_{H^s},s',s,d)>0$ independent of $\alpha$ such that
\begin{align}\label{m2}
\|\mathbf{S}_{t}^{\alpha}(u_0)\|_{L_T^{\infty} H^{s'}}\leq C_s\left\|u_0\right\|_{H^{s'}}, \quad\forall (\alpha,t) \in[0,\al_0]\times [0,T].
\end{align}
To simplify notation, we set $u=\mathbf{S}_{t}^{\alpha}(u_0).$ Applying  $\la =(1-\De)^{s/2}$ to \eqref{epa} and take the $L^2$ inner product with $\varLambda^{s}u$, we obtain
\begin{align}
\frac12\frac{\dd }{\dd t}\|u\|^2_{{H}^{s}}=&-\int_{\R^d}\bigl(u\cd\na \la u\bigr)\cdot \la u\dd x-\int_{\R^d}[\la,u\cd\na] u\cdot \la u\dd x\label{uhs1}\\
&-\a2\int_{\R^d}\la(1-\alpha^2\Delta)^{-1}\big(\div \M(u,u) \big)\cdot \la u\dd x\label{uhs2}\\
&-\int_{\R^d}\la(1-\alpha^2\Delta)^{-1}\N(u,u)\cdot \la u\dd x\label{uhs3}
\end{align} 
To bound the first term \eqref{uhs1}, by the classical commutator estimation (in the case where $\alpha=0$ as stated in Lemma \ref{mce}), it is easy to obtain
\begin{align}
|\eqref{uhs1}|&\leq\frac12\Big|\int_{\R^d}\div u|\la u|^2\dd x\Big|+\Big|\int_{\R^d}[\la,u\cd\na] u\cdot \la u\dd x\Big|\nonumber\\
&\leq \frac12\|\div u\|_{L^\infty}\|u\|^2_{{H}^{s}}+\|[\la,u\cd\na] u\|_{L^2}\|\la u\|_{L^2}\nonumber\\
&\leq C\|\na u\|_{L^\infty}\|u\|^2_{{H}^{s}},\label{use1}
\end{align}
for the second term
\begin{align}\label{use2}
  |\eqref{uhs2}|&=\Big|\int_{\R^d}\Big(\frac{\a2(1-\Delta)}{1-\alpha^2\Delta}\varLambda^{s-2} \div \M(u,u) \Big)\cdot \la u\dd x\Big|\nonumber\\
  &\leq C\|\varLambda^{s-2} \div \M(u,u)\|_{L^2}\|\la u\|_{L^2}\nonumber\\
  &\leq C\|\M(u,u)\|_{{H}^{s-1}}\|u\|_{{H}^{s}}\nonumber\\
  &\leq C\|\na u\|_{L^\infty}\|\na u\|_{{H}^{s-1}}\|u\|_{{H}^{s}},
\end{align}
for the estimation of the last item \eqref{uhs3}, using \eqref{gc1} in Lemma \ref{gce} to get
\begin{align}\label{use3}
  |\eqref{uhs3}|&=\big|\langle\la_\alpha\N(u,u),\la u\rangle_{L^2}\big|\leq C\|\na u \|_{L^\infty}\|u\|^2_{H^s}.
\end{align}
Combining \eqref{use1} \eqref{use2} and \eqref{use3} yields
\begin{align}\label{sse}
\frac{\dd}{\dd t}\|u(t)\|_{H^s} \leq C\|\na u \|_{L^\infty}\|u\|_{H^s}.
\end{align}
When $s>1+\frac{d}{2}$, as $ H^s(\R^d)\hookrightarrow Lip\ (\R^d)$ we have
\begin{align*}
  \frac{\dd}{\dd t}\|u(t)\|_{H^s}\leq C\|u\|^2_{H^s},
  \end{align*}
by solving this ODE directly and using the continuity arguments, there exists a time $T=\frac{1}{2C\|u_0\|_{H^s}}$ such that
\begin{align}\label{sse1}
 \|u(t)\|_{H^s}\leq \frac{\|u_0\|_{H^s}}{1-C\|u_0\|_{H^s}t}\leq 2\|u_0\|_{H^s},\quad t\in [0,T].
  \end{align}
 Moreover, if $u_0 \in H^{s'}$ for some $s'>s>1+\frac{d}{2}$, as \eqref{sse} also true for $s'$, then for $t\in[0,T]$, using \eqref{sse1} and $\|\na u \|_{L^\infty}\leq C\|u(t)\|_{H^s}\leq 2C\|u_0\|_{H^s}$ we have  
\begin{align}\label{sse2}
  \frac{\dd}{\dd t}\|u(t)\|_{H^{s'}} \leq C\|u_0\|_{H^s}\|u\|_{H^{s'}},
  \end{align}
 with Gronwall's inequality, we obtain that
 \begin{align}\label{sse3}
  \|u(t)\|_{H^{s'}} \leq e^{C\|u_0\|_{H^{s}}t}\left\|u_0\right\|_{H^{s'}}\leq C_s\|u_0\|_{H^{s'}},\quad t\in [0,T].
  \end{align}
  We need to point out that all the constants $C,~C_s,~T$ mentioned above depend only on $(\|u_0\|_{H^{s}}, s, d)$, and specifically do not depend on $\alpha.$ This complete the proof of \eqref{m1} \eqref{m1} and the Theorem \ref{th1}.
\subsection{Proof of Theorem \ref{th2}}
Roughly speaking, our proof of Theorem \ref{th2} based on the decomposition
\begin{align}
&\qquad\S^\al (u_0)-\S^0(u_0)\nn
&=\underbrace{\S^\al (S_nu_0)-\S^0(S_nu_0)}_{\delta_{n}}+\underbrace{\S^\al (u_0)-\S^\al (S_nu_0)}_{\w_{n}^\alpha}-\underbrace{\big(\S^0(u_0)-\S^0(S_nu_0)\big)}_{\w_{n}^0},\label{app}
\end{align}
with some error terms $\w_{n}^\alpha, \w_{n}^0$. We will derive the smallness of $\w_{n}^\alpha, \w_{n}^0$ and $\delta_n$ in two separate steps. 

\vspace*{1em}
\noindent{\bf Step 1: Estimation of $\|\S^\al (u_0)-\S^\al (S_nu_0)\|_{H^s}$}.

In order to simplify the results, denoting that 
\begin{align*}
  u(t)=\S^{\alpha}(u_0),&\quad u_n(t)=\mathbf{S}_{t}^{\alpha}(S_nu_0),\\
  \w(t)=u(t)-u_n(t),&\quad\varpi(t) =u(t)+u_n(t)=\w(t)+2u_n(t),
\end{align*}
then $\w(t)$ satisfies $\w(0)=(\mathrm{Id}-S_n)u_0$ and
\begin{align}
  \partial_{t} \w=-u\cd\na \w-\w\cd\na u_n-\alpha^2\frac{\div}{1-\alpha^2\Delta} \M(\varpi,\w)-\frac{1}{1-\alpha^2\Delta}\N(\varpi,\w),\label{wa}
\end{align}
applying  $\la =(1-\De)^{s/2}$ to the bothside and take the $L^2$ inner product with $\la\w$, we obtain
\begin{align}
  \frac12\frac{\dd }{\dd t}\|\w\|^2_{{H}^{s}}=&-\int_{\R^d}\bigl(u\cd\na \la \w\bigr)\cdot \la \w\dd x-\int_{\R^d}[\la,u\cd\na] \w\cdot \la \w\dd x\label{whs1}\\
  &-\int_{\R^d}\la\bigl(\w\cd\na u_n\bigr)\cdot \la \w\dd x\label{whs2}\\
  &-\a2\int_{\R^d}\la(1-\alpha^2\Delta)^{-1}\big(\div \M(\varpi,\w) \big)\cdot \la \w\dd x\label{whs3}\\
  &-\int_{\R^d}\la(1-\alpha^2\Delta)^{-1}\N(\varpi,\w)\cdot \la \w\dd x\label{whs4}
  \end{align}
We would deduce the boundedness of \eqref{whs1}\eqref{whs2}\eqref{whs3} and \eqref{whs4} separately. For the first term \eqref{whs1}, integration by part yields
\begin{align}
  \big|\eqref{whs1}\big|&=\big|\frac12\int_{\R^d}\div u\big|\la\w\big|^2\dd x-\int_{\R^d}[\la,u\cd\na] \w\cdot \la \w\dd x\big|\nonumber\\
  &\leq \frac12\|\div u\|_{L^\infty}\|\la\w\|^2_{L^2}+C\big(\|\na u\|_{L^\infty\cap H^{d/2}}+\|\na u\|_{H^{s-1}}\big)\|\w\|_{H^s}\|\la\w\|_{L^2}\nonumber\\
  &\leq C\big(\|\na u\|_{L^\infty\cap H^{d/2}}+\|\na u\|_{H^{s-1}}\big)\|\w\|^2_{H^s}.\label{w11}
  \end{align}
Then, for \eqref{whs2}, the Moser type inequation and the  implies that
\begin{align}
  \big|\eqref{whs2}\big|
  &\leq C\big(\|\w\|_{H^s}\|\na u_n\|_{L^\infty}+\|\w\|_{L^\infty}\|\na u_n\|_{H^s}\big)\|\la\w\|_{L^2}\nonumber\\
  &\leq C\|\na u_n\|_{L^\infty}\|\w\|^2_{H^s}+C\|u_n\|_{H^{s+1}}\|\w\|_{L^\infty}\|\w\|_{H^s}.\label{w22}
  \end{align}
For the third term \eqref{whs3}, through direct calculation we have
\begin{align}
  \big|\eqref{whs3}\big|&=\Big|\int\frac{\a2(1-\Delta)}{1-\alpha^2\Delta}\varLambda^{s-2}\big(\div \M(\varpi,\w) \big)\cdot \la \w\dd x\Big|\nonumber\\
  &\leq C\|\varLambda^{s-1} \M(\varpi,\w)\|_{L^2}\|\la\w\|_{L^2}\nonumber\\
  &\leq C\|\na \varpi\|_{H^{s-1}}\|\na \w\|_{H^{s-1}}\|\w\|_{H^s}\nonumber\\
  &\leq C\|\na \varpi\|_{H^{s-1}}\|\w\|^2_{H^s}.\label{w33}
  \end{align}
For the last term \eqref{whs4}, we first split it into the following parts 
\begin{align}
	&~\quad\eqref{whs4}\nn
  &=\int_{\R^d}\la(1-\alpha^2\Delta)^{-1}\N(\w,\w)\cdot \la \w\dd x+\int_{\R^d}\la(1-\alpha^2\Delta)^{-1}\N(2 u_n,\w)\cdot \la \w\dd x\nn
  &=\int_{\R^d}\la_\alpha\N(\w,\w)\cdot \la \w\dd x 
	+\sum_{i,j}\int\la_\alpha(\w^i\partial_j u_n^j+\w^j\partial_i u_n^j+u_n^i\partial_j\w^j+ u_n^j\partial_i\w^j)\la \w^i\dd x\nn
  &=\langle \la_\alpha\N(\w,\w), \la \w\rangle_{L^2}
	+\underbrace{\sum_{i,j}\int\la_\alpha(\w^i\partial_j u_n^j+\w^j\partial_i u_n^j)\la \w^i\dd x}_{\mathrm{II}_0}+\nn
	&\underbrace{\sum_{i,j}\int( u_n^i\partial_j\la_\alpha\w^j\la\w^i+ u_n^j\partial_i\la\w^j \la_\alpha \w^i)\dd x}_{\mathrm{II}_a}+\underbrace{\sum_{i,j}\int\big([\la_\alpha, u_n^i\partial_j]\w^j \la \w^i+[\la, u_n^j\partial_i]\w^j \la_\alpha \w^i)\dd x}_{\mathrm{II}_b},\nonumber
  \end{align}
  here we have used the notation $\la_\alpha=(1-\Delta)^{s/2}(1-\alpha^2\Delta)^{-1}.$ By the global convexity of $\N$ in Lemma \ref{gce}, we have
  \begin{align}
   \big|\langle \la_\alpha\N(\w,\w), \la \w\rangle_{L^2}\big|\leq C\|\na \w\|_{L^\infty}\|\w\|^2_{H^s}\label{ii}
  \end{align}
  For $\mathrm{II}_0$, the Moser inequality implies that
  \begin{align}
    \big|\mathrm{II}_0\big|
    &\leq C\big(\|\w\|_{H^s}\|\na u_n\|_{L^\infty}+\|\w\|_{L^\infty}\|\na u_n\|_{H^s}\big)\|\la\w\|_{L^2}\nonumber\\
    &\leq C\|\na u_n\|_{L^\infty}\|\w\|^2_{H^s}+C\|u_n\|_{H^{s+1}}\|\w\|_{L^\infty}\|\w\|_{H^s}.\label{ii0}
    \end{align}
  Inspired by the previous estimate of \eqref{uhs3}, we exchange the summation index $i,j$ in the second term of $\mathrm{II}_a$ to get that
  \begin{align}
    \big|\mathrm{II}_a\big|&=\Big|\sum_{i,j}\int( u_n^i\partial_j\la_\alpha\w^j\la\w^i+ u_n^i\partial_j\la\w^i \la_\alpha \w^j)\dd x\Big|\nonumber\\
    &=\Big|\sum_{i,j}\int u_n^i\partial_j\big(\la_\alpha\w^j\la\w^i\big)\dd x\Big|\nonumber\\
    &=\Big|-\sum_{i,j}\int\partial_j u_n^i\la_\alpha\w^j\la\w^i\dd x\Big|\nonumber\\
    &\leq C\|\na u_n\|_{L^\infty}\|w\|_{H^s}^2\label{iia}
    \end{align}
    Thanks for the modified commutator estimate in Lemma \ref{mce}, we can bound $\mathrm{II}_b$ by 
    \begin{align}
    \big|\mathrm{II}_b\big|&=\Big|\sum_{i,j}\int\big([\la_\alpha, u_n^i\partial_j]\w^j \la \w^i+[\la, u_n^j\partial_i]\w^j \la_\alpha \w^i)\dd x\Big|\nonumber\\
    &\leq\sum_{i,j}\Big(\|[\la_\alpha, u_n^i\partial_j]\w^j\|_{L^2} \|\la \w^i\|_{L^2}+\|[\la, u_n^j\partial_i]\w^j\|_{L^2} \|\la_\alpha \w^i\|_{L^2}\Big)\nonumber\\
    &\leq C\Big(\|\na u_n\|_{L^\infty}\|\w\|_{H^s}+\|\na u_n\|_{H^{s-1}}\|\na \w\|_{L^\infty}\Big)\|\w\|_{H^s}\label{iib}
    \end{align}
along with \eqref{ii}\eqref{ii0} and \eqref{iia}, we conclude that
\begin{align}
  \big|\eqref{whs4}\big| &\leq C\|\w\|^2_{H^s}+C\|u_n\|_{H^{s+1}}\|\w\|_{L^\infty}\|w\|_{H^s}.\label{w44}
  \end{align}
Combining \eqref{w11} \eqref{w22} \eqref{w33} and \eqref{w44} to obtain that
\begin{align}
  \frac{\dd }{\dd t}\|\w\|_{H^s} &\leq C\|\w\|_{H^s}+C\|u_n\|_{H^{s+1}}\|\w\|_{L^\infty}\nn
  &\leq C\|\w\|_{H^s}+C2^n\|\w\|_{H^{s-1}},\label{half}
  \end{align}
here we have used the embedding $H^{s-1}(\R^d)\hookrightarrow L^\infty$ when $s>1+\frac{d}2,$ and the fact that $\|u_n\|_{H^{s+1}}\leq C\|S_n u_0\|_{H^{s+1}}\leq C2^n\|u_0\|_{H^{s}}.$ 

To close \eqref{half}, we have to estimate $\|\w\|_{H^{s-1}},$ applying  $\l1=(1-\De)^{(s-1)/2}$ to the bothside of \eqref{wa} again, and take the $L^2$ inner product with $\l1\w,$ then
\begin{align}
  &\frac12\frac{\dd }{\dd t}\|\w\|^2_{{H}^{s-1}}\label{s-1}\\
  =&-\int_{\R^d}\bigl(u\cd\na \l1 \w\bigr)\cdot \l1 \w\dd x-\int_{\R^d}[\l1,u\cd\na] \w\cdot \l1 \w\dd x\nonumber\\
  &-\int_{\R^d}\l1\bigl(\w\cd\na v\bigr)\cdot \l1 \w\dd x-\a2\int_{\R^d}\l1(1-\alpha^2\Delta)^{-1}\big(\div \M(\varpi,\w) \big)\cdot \l1 \w\dd x\nonumber\\
  &-\int_{\R^d}\l1(1-\alpha^2\Delta)^{-1}\N(\w,\w)\cdot \l1 \w\dd x-\int_{\R^d}\l1(1-\alpha^2\Delta)^{-1}\N(2u_n,\w)\cdot \l1 \w\dd x.\nonumber
\end{align}
By following the same procedure as the $H^{s}$ estimates in \eqref{w11}\eqref{w22} \eqref{w33} and \eqref{w44}, we can establish the bounds of \eqref{s-1} line by line:
\begin{align}
  \frac{\dd }{\dd t}\|\w\|^2_{{H}^{s-1}}\leq C\Bigl(&\big(\|\na u\|_{L^\infty\cap H^{d/2}}+\|\na u\|_{H^{s-2}}\big)\|\w\|^2_{H^{s-1}}\nn
  &+\|\na u_n\|_{L^\infty}\|\w\|^2_{H^{s-1}}+\|u_n\|_{H^{s}}\|\w\|_{L^\infty}\|\w\|_{H^{s-1}}+\|\na \varpi\|_{H^{s-1}}\|\w\|^2_{H^{s-1}}\nn
  &+\|\w\|^2_{H^{s-1}}+\|u_n\|_{H^{s}}\|\w\|_{L^\infty}\|w\|_{H^{s-1}}\Bigr)\nonumber\\
\leq C &\big(\|u\|_{H^{s}}+\|u_n\|_{H^{s}}\big)\|\w\|^2_{H^{s-1}}\leq C_{\textit{\tiny T}}\|\w\|^2_{H^{s-1}}
\end{align}
Then, by Gronwall's inequality
\begin{align}
  \|\w\|_{{H}^{s-1}}&\leq C_{\textit{\tiny T}}\|\w_0\|_{{H}^{s-1}}=C\|(\mathrm{Id}-S_n)u_0\|_{{H}^{s-1}}\leq C2^{-n}\|(\mathrm{Id}-S_n)u_0\|_{{H}^{s}}.\label{ws-1}
\end{align}
Plug \eqref{ws-1} into \eqref{half}, we arrive at 
\begin{align}
  \frac{\dd }{\dd t}\|\w\|_{H^s} &\leq C\|\w\|_{H^s}+C_1\|\w_0\|_{H^s}.\nonumber
  \end{align}
Using Gronwall's inequality again, we get  
\begin{align}
 \|\w(t)\|_{H^s} &\leq Ce^{Ct}\|\w_0\|_{H^s}\leq C_{\textit{\tiny T}}\|(\mathrm{Id}-S_n)u_0\|_{{H}^{s}},\qquad \forall t\in[0,T].\label{w9}
  \end{align}

  \vspace*{1em}
  \noindent{\bf Step 2: Estimation of $\|\mathbf{S}_{t}^{\alpha}(S_nu_0)-\mathbf{S}_{t}^{0}(S_nu_0)\|_{H^s}$}.

For simplicity, denote that 
$$\delta(t)=u_n(t)-v_n(t)\quad\text{while}\quad u_n(t)=\mathbf{S}_{t}^{\alpha}(S_nu_0), ~v_n(t)=\mathbf{S}_{t}^0(S_nu_0).$$
We can verify that $\delta(t)$ satisfies $\delta(0)=0$ and the following equation
\begin{align}
  &~\quad\partial_{t} \delta+u_n\cd\na \delta+\delta\cd\na v_n\nonumber\\
  &=\frac{-\alpha^2\div}{1-\alpha^2\Delta} \M(u_n,u_n)-\frac{1}{1-\alpha^2\Delta}\N(u_n,u_n)+\N(v_n,v_n)\nonumber\\
  &=\frac{-\alpha^2\div}{1-\alpha^2\Delta} \M(u_n,u_n)-\frac{\alpha^2\Delta}{1-\alpha^2\Delta}\N(v_n,v_n)-\frac{1}{1-\alpha^2\Delta}\N(u_n+v_n,\delta)\nn
  &=\frac{-\alpha^2\div}{1-\alpha^2\Delta}\Bigl( \M(u_n,u_n)+\Delta\N(v_n,v_n)\Bigr)-\frac{1}{1-\alpha^2\Delta}\Bigl(\N(\delta ,\delta)+\N(2v_n ,\delta)\Bigr).\label{da}
\end{align}
Applying  the operator $\la =(1-\De)^{s/2}$ and take the $L^2$ inner product with $\la\delta $ which leads to  
\begin{align}
  &~\quad\frac12\frac{\dd }{\dd t}\|\delta \|^2_{{H}^{s}}\nonumber\\
  =&-\int_{\R^d}\bigl(u_n\cd\na \la \delta \bigr)\cdot \la \delta \dd x-\int_{\R^d}[\la,u_n\cd\na] \delta \cdot \la \delta \dd x-\int_{\R^d}\la\bigl(\delta \cd\na v_n\bigr)\cdot \la \delta \dd x\nonumber\\
  &-\a2\int_{\R^d}\la(1-\alpha^2\Delta)^{-1}\big(\div \M(u_n,u_n ) +\Delta \N(v_n,v_n )\big)\cdot \la \delta \dd x\nonumber\\
  &-\int_{\R^d}\la(1-\alpha^2\Delta)^{-1}\N(\delta ,\delta)\cdot \la \delta\dd x-\int_{\R^d}\la(1-\alpha^2\Delta)^{-1}\N(2v_n ,\delta)\cdot\la \delta\dd x\nonumber\\
  \leq&\quad\|\div u_n\|_{L^\infty}\|\la \delta\|^2_{L^2}+\|\na u_n\|_{H^{s-1}\cap L^\infty}\|\delta\|_{H^{s}}^2+\|\na v_n\|_{H^{s}\cap L^\infty}\|\delta\|_{H^{s}}^2\nn
  &\quad+\a2 C\|\div \M(u_n,u_n ) +\Delta \N(v_n,v_n )\|_{H^{s}}\|\delta\|_{H^{s}}\nn
  &\quad+C\bigl(\|\delta\|_{H^{s}}^3+\|\na v_n\|_{L^\infty\cap H^{s-1}}\|\delta\|_{H^{s}}^2+\|\na v_n\|_{H^{s}}\|\delta\|_{L^\infty}\|\delta\|_{H^{s}}\bigr)\nn
  \leq &\quad C\big(\|u_n \|_{H^{s}}+\|v_n \|_{H^{s+1}}\big)\|\delta \|_{H^{s}}^2+\a2\big(\|u_n \|_{H^{s+2}}+\|v_n \|_{H^{s+3}}\big)\|\delta\|_{H^{s}}\label{de2}
  \end{align}
As indicated by the growth rate $\|u_n, v_n\|_{H^{s+k}}\leq C_{\textit{\tiny T}}\|S_nu_0\|_{H^{s+k}}\leq C2^{kn},$ it can be inferred from \eqref{de2} that
\begin{align*}
 \frac{\dd }{\dd t}\|\delta \|_{{H}^{s}}\leq &\quad C2^n\|\delta \|_{H^{s}}+C\a2 2^{3n}
  \end{align*}
and the Gronwall's inequality implies
\begin{align}
  \|\delta (t)\|_{H^s} &\leq C\a2 2^{3n}te^{C2^nt}\leq C\a2 2^{3n}e^{C2^n},\qquad \forall t\in[0,T].\label{d9}
   \end{align}
Back to the decomposition \eqref{app}, with the combining results of \eqref{w9} and \eqref{d9}, we obtain that 
\begin{align}
   \|\S^\al (u_0)-\S^0(u_0)\|_{H^s} \leq C\a2 2^{3n}e^{C2^n}+C\|(\mathrm{Id}-S_n)u_0\|_{{H}^{s}},\qquad \forall (t,n)\in [0,T]\times \mathbb{N}^+ .\label{u9}
\end{align}
From \eqref{u9}, we can conclude the theory by such that: given $u_0\in H^s(\R^d)$ and any $\epsilon>0,$ we first find a large $N$ such that $C\|(\mathrm{Id}-S_N)u_0\|_{{H}^{s}}< \epsilon/2.$ Then, for the fixed $N$, we could choose a small $\alpha_{\textit{\tiny N}}$ such that $C\alpha^2_{\textit{\tiny N}} 2^{3\textit{\tiny N}}\exp (C2^\textit{\tiny N})< \epsilon/2,$ thus $\|\S^{\alpha_{\textit{\tiny N}}} (u_0)-\S^0(u_0)\|_{H^s}< \epsilon.$ As a result, we obtain
$$\lim_{\al\to 0} \|\S^\al (u_0)-\S^0(u_0)\|_{H^s}=0,\qquad \forall t\in [0,T],$$
this complete the proof of Theorem \ref*{th2}.

\vspace*{1em}
\noindent\textbf{Acknowledgements.}
M. Li was partially supported by the Jiangxi Provincial Natural Science Foundation (No.20232BAB201013 and 20212BAB201008), and China Postdoctoral Science Foundation (No. 2023M731431), and the National Natural Science Foundation of China (No.12101271). 

\vspace*{1em}
\noindent\textbf{Conflict of interest}
The authors declare that they have no conflict of interest.

\end{document}